\pgfplotsset{compat=1.7}
\newtheorem{assumption}{\bf Assumption}
\newtheorem{thm}{\bf Theorem}
\newtheorem{lem}{\bf Lemma}
\newtheorem{proposition}{\bf Proposition}
\theoremstyle{definition}
\DeclareFontFamily{OT1}{pzc}{}
\DeclareFontShape{OT1}{pzc}{m}{it}{<-> s * [1.200] pzcmi7t}{}
\DeclareMathAlphabet{\mathpzc}{OT1}{pzc}{m}{it}
\newcommand{\LL}{\ensuremath{\mathcal{L}}}
\newcommand{\FF}{\ensuremath{\mathscr{F}}}
\renewcommand\footnotemark{}
\newmdenv[
  hidealllines=true,
  backgroundcolor=blue!10,
  innerleftmargin=8pt,
  innerrightmargin=8pt,
  innertopmargin=0pt,
  innerbottommargin=6pt,
  leftmargin=-0pt,
  rightmargin=-0pt
]{shadedbox}
\definecolor{forest}{rgb}{0, 0.5, 0}
\let\NAT@parse\undefined
\title{\LARGE
State-Output Risk-Constrained Quadratic Control \\ of
Partially Observed Linear Systems 
}
\author{Nikolas Koumpis$^{\star}$, Anastasios Tsiamis$^{\dagger}$ and Dionysios Kalogerias$^{\star}$
\thanks{$^{\star}$
Department of Electrical Engineering, Yale University, New Haven, CT 06511 (email: \{nikolaos.koumpis, dionysis.kalogerias\}@yale.edu).
}
\thanks{$^{\dagger}$ Automatic Control Laboratory, ETH Z\"urich, 8092, Switzerland (email: {atsiamis}@control.ee.ethz.ch).}
}
\def\@cite#1#2{[\textbf{#1\if@tempswa , #2\fi}]}
\def\@biblabel#1{\textbf{[}\textbf{#1}\textbf{]}}
\begin{document}
\pgfmathdeclarefunction{gauss}{2}{\pgfmathparse{1/(#2*sqrt(2*pi))*exp(-((x-#1)^2)/(2*#2^2))}%
}

\date{}
\maketitle
\thispagestyle{empty}
\pagestyle{empty}
\begin{abstract}
We propose a methodology for performing risk-averse quadratic regulation of partially observed Linear Time-Invariant (LTI) systems, disturbed by process and output noise.
To compensate against the induced variability due to \textit{both} types of noises, state regulation is subject to two risk constraints. The latter render the resulting controller to be cautious of stochastic disturbances, by restricting the statistical variability, namely, a simplified version of the cumulative expected predictive variance, of both the \emph{state} and the \emph{output}.
It turns out that our proposed formulation results in an optimal risk-averse policy that preserves favorable characteristics of the classical Linear Quadratic (LQ) control.
In particular, 
the optimal policy has an affine structure with respect to the minimum mean square error (mmse) estimates. The linear component of the policy regulates the state more strictly in riskier directions, where the
process and output noise covariance, cross-covariance, and the corresponding penalties are simultaneously large. This is achieved by ``inflating" the state penalty in a systematic way. The additional affine terms force the state against pure and cross third-order statistics of the process and output disturbances. 
Another favorable characteristic of our optimal policy is that it
can be pre-computed off-line, thus, avoiding limitations of prior work. 
Stability analysis shows that the derived controller is always internally stable regardless of parameter tuning. 
The functionality of the proposed risk-averse policy is illustrated through a working example via extensive numerical simulations. 
\end{abstract}

\section{Introduction}
Decision policies designed to be optimal on average are often inadequate in many practical applications, especially where the system state transitions are subject to unexpected, less probable, though with possibly catastrophic consequences, random events. 
 Critical applications where such risky events should be accounted for are especially pronounced in many areas including robotics \cite{kim2019bi,pereira2013risk}, wireless communications \cite{ma2018risk}, networking \cite{bennis2018ultrareliable,iwaki2021multi}, control \cite{fan2020bayesian}, formal methods~\cite{lindemann2021stl}, health \cite{Cardoso2019}, and finance \cite{dentcheva2017statistical} to name a few.
{A major challenge that arises is that we not only have to deal with stochastic environments, but also with imperfect information, partial observability, and noisy measurements.
Although in some cases we can theoretically characterize optimal risk-aware policies for general partially-observed systems, their implementation can be really hard if not impossible~\cite{tsiamis2021linear}. In this paper, we are interested in \emph{implementable} risk-averse control policies that  counteract against those less probable though catastrophic events for both the states and the outputs, at the cost of slightly sacrificing performance on average.}
Recently, a risk-constrained formulation for Linear Quadratic (LQ) control of partially-observed linear systems was introduced in~\cite{tsiamis2021linear}. While the optimal risk-averse policy of~\cite{tsiamis2021linear} can be characterized, it is only implementable in the two simple settings of Gaussian noise and full state-observations. In the non-Gaussian case, implementing the optimal policy, requires online computation of a filtered version of the Riccati difference equation. Such a computation is hard if not impossible to perform. Here, inspired by~\cite{tsiamis2021linear}, we aim to find a control policy that can be implemented efficiently while it retains its risk-aversion properties. Our contributions are the following.

\textbf{State and output risk constraints.} We propose novel risk constraints, where we restrict the variability of both the state and the output. In particular, we restrict the cumulative expected predictive variance of the state and output, where the prediction is conditioned on the previous state. By adjusting the constraint specifications, we can control the tradeoff between average LQ cost and statistical variability due to \textit{both} process and output uncertainties. State predictive variance constraints were also studied in~\cite{tsiamis2021linear}, where the prediction was conditioned on the past output information. Here, the prediction is conditioned on the previous state, i.e. it discounts the uncertainty of the estimation error. However, this allows us to obtain a tractable and implementable risk-averse controller. In particular, the optimal policy here can be pre-computed \emph{off-line} based only on second and third order noise-statistics, overcoming the limitations of~\cite{tsiamis2021linear}.\\
\textbf{Optimal Risk-Aware policy \& Stability.} We show that the proposed risk-averse formulation results in a twice quadratically constrained LQ
problem, which admits a closed-form solution. The optimal risk-averse strategy is affine with respect to the state estimates, with the affine terms  directly repelling the pure and cross third order noise-statistics. Further, an inflated state penalty matrix amplifies regulation by over-measuring directions where process and output noise present (jointly and each separately) high risk.\\
\textbf{Arbitrary noise model.}  Our results are applicable for all skew-symmetric, heavy tailed, and  skewed (process and/or output) models provided that the corresponding fourth order moments remain finite. This is in contrast to the classical Linear Exponential Quadratic Gaussian control~\cite{whittle}, where the noise distributions have to be Gaussian.\\
\textbf{Separation.} We show that separation holds, in the sense that the optimal policy is affine with respect to the minimum mean square error (mmse) estimate. Hence, the filter and the controller can be designed separately. Computing efficiently the mmse estimate in the non-Gaussian is a very hard problem of independent interest and outside of the scope of this paper. In the simulations we use the Kalman Filter, which, despite being suboptimal, is the best linear filter~\cite{anderson2012optimal}.

\textsc{\textbf{Related work.}} Having its origins in mathmatical finance and operations research, risk-averse optimization has been lately naturally emerging in many applications, and has been considered in a variety of  contexts~\cite{A.2018,Cardoso2019,W.Huang2017,Jiang2017,Kalogerias2018b,Tamar2017,Vitt2018,Zhou2018,Ruszczynski2010,SOPASAKIS2019281,chapman2019cvar}. 
The main paradigm shift in risk-averse optimization is to replace expectations by more general \textit{risk measures} \cite{shapiro2021lectures}, as an attempt to more effectively capture the tail behavior of the involved random cost function, on top of or trading with mean performance. Typical examples are mean-variance functionals~\cite{Markowitz1952,shapiro2021lectures}, mean-semideviations~\cite{Kalogerias2018b}, and Conditional Value-at-Risk (CVaR)~\cite{Rockafellar1997}. In particular for the case of control systems, apart form the well-known and studied exponential quadratic approach, CVaR optimization techniques have also been considered for risk-aware constraint satisfaction~\cite{chapman2019cvar}. Although CVaR captures variability and tail events well, CVaR optimization problems rarely admit closed-form expressions \cite{chapman2019cvar}, \cite{chapman2021classical}. 
\par
The predictive variance of the stage cost was introduced in \cite{tsiamis2020risk}, \cite{abeille2016lqg} for risk-aware quadratic state regulation in the fully observable case and subsequently it was applied in \cite{tsiamis2021linear} for partially observed systems.  
Although both formulations characterize the resulting policy in closed form, the latter ends up being dependent on filtered information and as such it can be pre-computed off-line only when both the process and the measurement noise are Gaussian.
Finally, a closely related problem is the celebrated Linear Exponential Quadratic Gaussian (LEQG) control problem~\cite{whittle,Speyer2008STOCHASTIC}. Although it admits a closed-from solution, it also requires Gaussian noise, which does not capture distributions with asymmetric (skewed) structure. Moreover, tuning the exponential parameter can be challenging, since certain values above a threshold lead to unstable controllers (neurotic breakdown).
\par
\textsc{\textbf{Notation.}} With  $\mu_{s,o}$ we refer to the pair $(\mu_{s},\mu_{o})$, and by  $\mu_{s,o}>0$ we declare that both $\mu_{s}$ and $\mu_{o}$ are positive. 
We denote a sequence of arrays $x_{0},\dots,x_{k}$, as $x_{0:k}$, and lastly the $\sigma-$algebra generated by a random array $x$ as $\sigma(x)$. We use $||x||^{2}_{Q}$ to denote the $Q-$weighted norm of $x\in\mathbb{R}^{n}$, with the matrix $Q\in\mathbb{R}^{n\times{n}}$ being positive (semi) definite.
\section{Risk-constrained Linear Quadratic formulation with partial observations}
Consider a dynamical system described by the linear difference equation\vspace{-1pt}
\begin{equation}
    x_{t+1}=Ax_{t}+Bu_{t}+w_{t+1},\label{dynamics}
\end{equation}
with its output given by\vspace{-2pt}
\begin{equation}
    y_{t}=Cx_{t}+\epsilon_{t}.\label{out}
\end{equation}
Here $x_{t}\in\mathbb{R}^{n}$ is the hidden state of the system, $u_{t}\in\mathbb{R}^{m}$ is the control input, and $y_{t}\in \mathbb{R}^{q}$ are the outputs/observations of the system. In our study, the model uncertainty $w_{t}$ and the output noise $\epsilon_{t}$ are assumed \textit{i.i.d} stochastic sequences, and not necessarily Gaussian, with $\mathbb{E}\{w_{t}\}=\bar{w}$, $\mathbb{E}\{\epsilon_{t}\}=\bar{\epsilon}$, $\mathbb{E}\{(w_{t}-\bar{w})(w_{t}-\bar{w})^{\top}\}=W$, and $\mathbb{E}\{(\epsilon_{t}-\bar{\epsilon})(\epsilon_{t}-\bar{\epsilon})^{\top}\}=E$, $\mathbb{E}\big\{(w_{t}-\bar{w})(\epsilon_{t}-\bar{\epsilon})^{\top}
\big\}=H$. 
Let $\mathscr{F}_{t}\triangleq \sigma(y_{0:t},u_{0:t-1})$ be the sigma algebra generated by all the observable  quantities up to time $t$.
Based on the above notation, the state estimate and state prediction at time $t$ respectively read 
\begin{align}
\widehat{x}_{t|t-1}&=\mathbb{E}\{x_{t}|\mathscr{F}_{t-1}\}, ~~~~~
\widehat{x}_{t|t}=\mathbb{E}\{x_{t}|\mathscr{F}_{t}\}\label{estimates},
\end{align}
with corresponding errors
\begin{align}
    \rho_{t}&={x}_{t}-\widehat{x}_{t|t-1},~~~~~~
    e_{t}={x}_{t}-\widehat{x}_{t|t},\label{est}
\end{align}
where at each time we expect zero for both, i.e.,  $\mathbb{E}\{e_{t}|\mathscr{F}_{t-1}\}=\mathbb{E}\{\rho_{t}|\mathscr{F}_{t-1}\}=0$. Lastly, we also define 
\begin{align}
    \Sigma_{t|t}&=\mathbb{E}\{e_{t}e^{\top}_{t}|\mathscr{F}_{t}\},\label{ercov}\\[5pt]
    \Sigma_{t|t-1}&=\mathbb{E}\{e_{t}e^{\top}_{t}|\mathscr{F}_{t-1}\},\label{predcov}\\[5pt]
    H_{t|t-1}&=\mathbb{E}\{e_{t}\rho^{\top}_{t}|\mathscr{F}_{t-1}\}\label{croscov}.
\end{align}
Throughout this paper, we assume that both the process and output noises have finite fourth moments. 
\begin{assumption}[\textbf{Noise Regularity}]\label{as1} The processes $w_{t}$, and $\epsilon_{t}$ have finite fourth-order moments, i.e., for every $t\in\mathbb{N}$, $\mathbb{E}\{||w_{t}||^{4}_{2}\}<\infty$, and $\mathbb{E}\{||\epsilon_{t}||^{4}_{2}\}<\infty$. 
\end{assumption}
The above assumption guarantees that our risk-averse LQ control problem, defined below, is well-posed. It covers general, potentially non-Gaussian, distributions.
Before we state the main problem, let us define the ``extended" sigma algebra $\overline{\mathscr{F}}_{t}\triangleq \sigma(y_{0:t},u_{0,t-1},x_{0:t})$, which is the sigma algebra of the observable quantities plus the hidden states. Let also $\LL_2(\mathscr{F}_{t})$ be the space of square integrable vectors, which are $\mathscr{F}_{t}-$measurable. We pose the following problem:
\begin{align} 
J(u)=&\min _{u} \mathbb{E}\left\{ ||x_{N}||^{2}_{Q}+\sum_{t=0}^{N-1} ||x_{t}||^{2}_{Q}+||u_{t}||^{2}_{R}\right\}\nonumber \\\text { s.t. }
J_{s}(u)=&\mathbb{E}\left\{\sum_{t=1}^{N}\left[||x_{t}||^{2}_{Q_{s}}{-}\mathbb{E}\left(||x_{t}||^{2}_{Q_{s}} \mid \overline{\mathscr{F}}_{t-1}\right)\right]^{2}\right\} \leq \epsilon_{s},
\nonumber\\
J_{o}(u)=&\mathbb{E}\left\{\sum_{t=1}^{N}\left[||y_{t}||^{2}_{Q_{o}}{-}\mathbb{E}\left(||y_{t}||^{2}_{Q_{o}} \mid \overline{\mathscr{F}}_{t-1}\right)\right]^{2}\right\} \leq \epsilon_{o},
\nonumber\\
&x_{t+1}=Ax_{t}+Bu_{t}+w_{t+1},
\nonumber\\
&y_{t}=Cx_{t}+\epsilon_{t},
\nonumber\\
& u_{t} \in \mathcal{L}_{2}\left(\mathscr{F}_{t}\right), t=0, \ldots, N-1.\label{problem}
\end{align}
We impose constraints on the cumulative expected predictive variance of the state and the output. This essentially forces the optimal controller to not only optimize average performance but also reduce the \emph{variability} of the state and the output. Note that we \emph{discount} the risk of the estimation error $e_t$ in the predictive variance. This is why the prediction in the constraints is conditioned on the \emph{extended} sigma algebra $\overline{\mathscr{F}}_{t-1}$ and not $\FF_{t-1}$. Taking the estimation error into account, would result in an intractable optimal control problem~\cite{tsiamis2021linear}. By discounting $e_t$, we sacrifice some risk-aversion to obtain an implementable controller. However, we still account for the immediate process and output noises.  In any case, the control input only has access to the partial observations $\mathscr{F}_{t-1}$ as captured by the requirement $u_t\in\LL_2(\FF_{t})$.  Finally, the positive semi-definite weighting matrices $Q_s,Q_o$ are some additional tuning parameters that we can tweak, offering more flexibility to our LQ formulation. 

\section{Optimal Risk-Averse LQR Controllers}
The derivation of the optimal risk-averse policy tracks \cite{tsiamis2020risk} and it is summarized in the following steps: We first show that \eqref{problem} is well defined and that it can be recast to a sequential \textit{twice quadratically constrained quadratic program (TQCQP)}. Then, we utilize Lagrangian duality to solve \eqref{problem} in closed form.
\begin{proposition}[\textbf{TQCQP Reformulation}]
\label{prop1}
Let Assumption \ref{as1} be in effect, and define 
\begin{align}
    p_{t}\triangleq{C}(w_{t}-\bar{w})+\epsilon_{t}-\bar{\epsilon},\nonumber
\end{align}
along with the higher order weighted statistics
\begin{align}
    \mathrm{M}_{w}&\triangleq\mathbb{E}\Big\{(w_{t}-\bar{w})(w_{t}-\bar{w})^{\top}Q_{s}(w_{t}-\bar{w})\Big\}\nonumber,\\
     \mathrm{M}_{\epsilon}&\triangleq\mathbb{E}\Big\{(\epsilon_{t}-\bar{\epsilon})(\epsilon_{t}-\bar{\epsilon})^{\top}Q_{o}(\epsilon_{t}-\bar{\epsilon})\Big\}\nonumber,\\[5pt]
    m_{w}&\triangleq \mathbb{E}\Big\{\Big((w_{t}-\bar{w})^{\top}Q_{s}(w_{t}-\bar{w})-\mathrm{tr}(Q_{s}W)\Big)^{2}\Big\},\nonumber\\
    \mathrm{M}&\triangleq\mathbb{E}\Big\{p_{t}p_{t}^{\top}Q_{o}p_{t}\Big\}\label{enea},\\
    m_{w\epsilon}&\triangleq \mathbb{E}\Big\{\Big(p_{t}^{\top}Q_{o}p_{t}-\mathrm{tr}(Q_{o}P)\Big)^{2}\Big\}.\nonumber
\end{align}
Then, the risk-constrained LQ problem \eqref{problem} is well-defined and equivalent to the sequential variational TQCQP
\begin{align} 
\min_{u}&~ \mathbb{E}\left\{x_{N}^{\top} Q x_{N}+\sum_{t=0}^{N-1} x_{t}^{\top} Q x_{t}+u_{t}^{\top} R u_{t}\right\}\nonumber\\
\text{\normalfont s.t.}&~\mathbb{E}\left\{\sum^{N}_{t=1}4x_{t}^{\top}Q_{s}WQ_{s}x_{t}+4x^{\top}_{t}Q_{s}\mathrm{M}_{w}\right\}<\bar{\epsilon}_{s},\nonumber
\\
&~\mathbb{E}\left\{\sum^{N}_{t=1}4x^{\top}_{t}C^{\top}Q_{o}PQ_{o}Cx_{t}+4x^{\top}_{t}C^{\top}Q_{o}\mathrm{M}\right\}<\bar{\epsilon}_{o},\nonumber
\\
&x_{t+1}=Ax_{t}+Bu_{t}+w_{t+1},\nonumber
\\
& u_{t} \in \mathcal{L}_{2}\left(\mathscr{F}_{t}\right), t=0, \ldots, N-1 ,\label{qcqp}
\end{align}
\textit{where}
\begin{align}
P&{=}CWC^{\top}+CH+(HC)^{\top}+E, \nonumber\\[5pt]
\bar{\epsilon}_{s}&{=}\epsilon_{s}-Nm_{w}+4N(\mathrm{tr}(Q_{s}W))^{2},\nonumber\\[5pt]
\bar{\epsilon}_{o}&{=}\epsilon_{o}{-}Nm_{w\epsilon}{-}4N\bar{\epsilon}^{\top}Q_{o}\mathrm{M}{-}8N\mathrm{tr}(C^{\top}Q_{o}PQ_{o}CW)-4N\mathrm{tr}(Q_{o}PQ_{o}Z)\nonumber,
\end{align}
and $Z=\mathbb{E}\big\{(\bar{\epsilon}-C\delta_{t})(\bar{\epsilon}-C\delta_{t})^{\top}
\big\}$.
\end{proposition}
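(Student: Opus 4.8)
The plan is to reduce each predictive-variance functional to the expectation of a quadratic form in the state, after which both well-posedness and the precise shape of the constraints in \eqref{qcqp} fall out. The one structural fact I would exploit throughout is that the conditioning sigma-algebra $\overline{\FF}_{t-1}$ contains $x_{t-1}$ (and $u_{t-1}$, which is $\FF_{t-1}$-measurable), while the i.i.d.\ hypothesis makes the pair $(w_t,\epsilon_t)$ independent of $\overline{\FF}_{t-1}$. Hence, conditioned on $\overline{\FF}_{t-1}$, the only randomness left in $x_t$ is $w_t$, so the conditional law of $x_t$ is a deterministic shift of the noise by the measurable prediction $\bar{x}_t:=\mathbb{E}\{x_t\mid\overline{\FF}_{t-1}\}=Ax_{t-1}+Bu_{t-1}+\bar{w}$. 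This collapse of the conditional law to a shifted noise term is the conceptual core.

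For the state constraint I would write $x_t=\bar{x}_t+(w_t-\bar{w})$ and decompose the centered stage cost as
\[
\|x_t\|_{Q_s}^2-\mathbb{E}\{\|x_t\|_{Q_s}^2\mid\overline{\FF}_{t-1}\}=2\bar{x}_t^{\top}Q_s(w_t-\bar{w})+\big[(w_t-\bar{w})^{\top}Q_s(w_t-\bar{w})-\mathrm{tr}(Q_sW)\big].
\]
Squaring and taking $\mathbb{E}\{\cdot\mid\overline{\FF}_{t-1}\}$, the square of the linear part yields $4\bar{x}_t^{\top}Q_sWQ_s\bar{x}_t$; the cross term collapses to $4\bar{x}_t^{\top}Q_s\mathrm{M}_w$ (the spurious piece dies because $\mathbb{E}\{w_t-\bar{w}\}=0$), exhibiting the third-order statistic $\mathrm{M}_w$; and the square of the centered quadratic part contributes the constant $m_w$. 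Summing over $t$, taking total expectation, and finally trading the prediction for the state through the identity $\mathbb{E}\{\bar{x}_t^{\top}M\bar{x}_t\}=\mathbb{E}\{x_t^{\top}Mx_t\}-\mathrm{tr}(MW)$ (the linear term being unaffected in expectation), I would absorb every additive constant into the redefined level $\bar{\epsilon}_s$, producing the first constraint of \eqref{qcqp}.

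The output constraint follows the same recipe with $y_t=C\bar{x}_t+\bar{\epsilon}+p_t$, where $p_t=C(w_t-\bar{w})+\epsilon_t-\bar{\epsilon}$ has conditional mean zero and conditional covariance $P$. The identical algebra produces the quadratic $4(C\bar{x}_t+\bar{\epsilon})^{\top}Q_oPQ_o(C\bar{x}_t+\bar{\epsilon})$, the linear term $4(C\bar{x}_t+\bar{\epsilon})^{\top}Q_o\mathrm{M}$, and the constant $m_{w\epsilon}$. Expanding the mean offset $\bar{\epsilon}$, converting $\bar{x}_t$ back to $x_t$ as before, and gathering the resulting $\bar{\epsilon}$-linear and trace terms gives the second constraint together with the stated $\bar{\epsilon}_o$. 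Since the objective, the dynamics, and the requirement $u_t\in\LL_2(\FF_t)$ are untouched, equivalence of \eqref{problem} and \eqref{qcqp} follows; well-posedness is then immediate, because once the constraints are quadratic in $x_t$, Assumption \ref{as1} makes $\mathrm{M}_w,\mathrm{M},m_w,m_{w\epsilon}$ finite and $u_t\in\LL_2(\FF_t)$ makes $\mathbb{E}\{\|x_t\|^2\}$ finite.

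I expect the genuine difficulty to lie not in the conceptual reduction but entirely in the bookkeeping of the additive constants, and the output constraint to be the main obstacle. There the cross-covariance $H$ threads through both $P$ and the third-order term $\mathrm{M}$, and one must simultaneously track the mean offset $\bar{\epsilon}$ and the prediction-to-state trace correction when collapsing everything into $\bar{\epsilon}_o$; keeping every such term with the correct sign and the right multiplicity of $N$ is where errors would most easily creep in.
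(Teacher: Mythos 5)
Your proposal is correct and follows essentially the same route as the paper's own proof: both condition on $\overline{\mathscr{F}}_{t-1}$ to reduce $x_t$ (resp.\ $y_t$) to a measurable prediction plus centered noise $\delta_t$ (resp.\ $p_t$), square the resulting decomposition, kill the cross terms via zero mean and independence (producing $\mathrm{M}_w$, $\mathrm{M}$, $m_w$, $m_{w\epsilon}$), and then trade the prediction back for the state while absorbing the constants into $\bar{\epsilon}_s,\bar{\epsilon}_o$. The only difference is cosmetic: your packaged identity $\mathbb{E}\{\bar{x}_t^{\top}M\bar{x}_t\}=\mathbb{E}\{x_t^{\top}Mx_t\}-\mathrm{tr}(MW)$ replaces the paper's term-by-term expansion of $\tilde{x}_t=x_t-\delta_t$, and it incidentally makes explicit that the correction constant is $\mathrm{tr}\big((Q_sW)^2\big)$, which is how the paper's expression $(\mathrm{tr}(Q_sW))^2$ must be read.
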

The proof is given in the Appendix \ref{proofprop1}. \textit{Proposition \textbf{\ref{prop1}}} allows us to utilize duality theory and thus solve \eqref{problem} in closed form via dynamic programming. 
\subsection{Lagrangian Duality}
To tackle problem \eqref{problem}, we consider the \textit{variational Lagrangian} $\mathcal{L}:\mathcal{L}_{2}(\mathscr{F}_{0})\times\dots\times\mathcal{L}_{2}(\mathscr{F}_{N-1})\times\mathbb{R}_{+}\times\mathbb{R}_{+}\rightarrow\mathbb{R}$ of the sequential TQCQP \eqref{qcqp}, defined as
\begin{align}
    \mathcal{L}(u;\mu_{s,o})\triangleq J(u)+\mu_{s}J_{s}(u)+\mu_{o}J_{o}(u)-\mu_{s}\bar{\epsilon}_{s}-\mu_{o}\bar{\epsilon}_{o},\label{lag}
\end{align}
where $\mu_{s,o}\geq0$ are multipliers associated with the variational risk constraints of \eqref{qcqp}. The dual function $\mathcal{D}:\mathbb{R}^{+}\times\mathbb{R}^{+}\rightarrow[-\infty,+\infty)$ of the primal problem \eqref{qcqp} is defined as 
\begin{align}
\mathcal{D}(\mu_{s,o})=\inf_{u\in\mathcal{U}}  \mathcal{L}(u,\mu_{s,o}),\nonumber
\end{align}
where
\begin{align}
\mathcal{U} \triangleq\left\{u_{0: N-1} \in \prod_{t=k}^{N-1} \mathcal{L}_{2}\left(\mathscr{F}_{t}\right) \Bigg| x_{t+1}=A x_{t}+B u_{t}+w_{t+1}\right\}\nonumber
\end{align}
refers to the constraints that remain not dualized. The optimal value of the always concave dual problem 
\begin{align}
\sup_{\mu_{s,o}\geq0}\mathcal{D}(\mu_{s,o})\equiv\sup_{\mu_{s,o}\geq0}\inf_{u\in\mathcal{U}}\mathcal{L}(u,\mu_{s,o}),\label{dual}
\end{align}
with $\mathcal{D}^{*}=\sup_{\mu_{s,o}\geq0}\mathcal{D}(\mu_{s,o})\in[-\infty,\infty]$ is the tightest under-estimate of the optimal value of the primal problem $J^{*}$, when knowing only $\mathcal{D}$. Leveraging Lagrangian duality, we may now state the following
result, which provides sufficient optimality conditions for the TQCQP \eqref{qcqp}.
\begin{thm}[\textbf{Optimality Conditions}]\label{thm1} With \textit{Assumption} \ref{as1} being in effect, suppose there exist a feasible policy-multiplier pair $(u^{*},\mu^{*}_{s,o})\in\mathcal{U}\times\mathbb{R}_{+}\times\mathbb{R}_{+}$ such that 
\begin{enumerate}
    \item $\mathcal{L}(u^{*}(\mu^{*}_{s,o}),\mu^{*}_{s,o})=\min_{u\in\mathcal{U}}\mathcal{L}(u,\mu^{*}_{s,o})=\mathcal{D}(\mu^{*}_{s,o})$
    \item $J_{s}(u^{*})\leq\bar{\epsilon}_{s}$, and $J_{o}(u^{*})\leq\bar{\epsilon}_{o}$.
    \item $\mu_{s}(J_{s}(u^{*})-\bar{\epsilon}_{s})=0$, and $\mu_{o}(J_{o}(u^{*})-\bar{\epsilon}_{o})=0$.
\end{enumerate}
Then, $u^{*}$ is optimal for the primal \eqref{qcqp} and the initial problem \eqref{problem}, $\mu_{s,o}$ are optimal for the dual problem \eqref{dual}, and \eqref{qcqp} exhibits zero duality gap.
\end{thm}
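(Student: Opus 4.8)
The plan is to prove the statement as a textbook \emph{sufficient conditions for optimality} argument built on weak duality, observing that conditions (1)--(3) together encode the fact that $(u^{*},\mu^{*}_{s,o})$ is a saddle point of the variational Lagrangian $\mathcal{L}$. Because these are sufficient rather than necessary conditions, no convexity of the feasible set $\mathcal{U}$ or of the functionals $J,J_{s},J_{o}$ is required; the three hypotheses do all the work directly. The strategy is to sandwich the primal value $J^{*}$ and the dual value $\mathcal{D}^{*}$ between a single quantity, $J(u^{*})$, and conclude that all the intervening inequalities collapse to equalities.

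First I would establish weak duality. Fix any primal-feasible policy $u\in\mathcal{U}$ with $J_{s}(u)\le\bar{\epsilon}_{s}$ and $J_{o}(u)\le\bar{\epsilon}_{o}$, and any multipliers $\mu_{s,o}\ge0$. Then from the definitions of the dual function and the Lagrangian,
\begin{align*}
\mathcal{D}(\mu_{s,o})=\inf_{u'\in\mathcal{U}}\mathcal{L}(u',\mu_{s,o})\le\mathcal{L}(u,\mu_{s,o})=J(u)+\mu_{s}\big(J_{s}(u)-\bar{\epsilon}_{s}\big)+\mu_{o}\big(J_{o}(u)-\bar{\epsilon}_{o}\big)\le J(u),
\end{align*}
where the last inequality uses $\mu_{s,o}\ge0$ together with feasibility. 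Taking the supremum over $\mu_{s,o}\ge0$ on the left and the infimum over feasible $u$ on the right yields $\mathcal{D}^{*}\le J^{*}$.

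Next I would collapse this chain to an equality. By condition (1), $\mathcal{L}(u^{*},\mu^{*}_{s,o})=\mathcal{D}(\mu^{*}_{s,o})$; expanding the Lagrangian and invoking the complementary slackness of condition (3) annihilates both constraint terms, giving $\mathcal{D}(\mu^{*}_{s,o})=\mathcal{L}(u^{*},\mu^{*}_{s,o})=J(u^{*})$. Since $u^{*}$ is primal-feasible by condition (2), it is an admissible competitor, so $J^{*}\le J(u^{*})$, while trivially $\mathcal{D}(\mu^{*}_{s,o})\le\mathcal{D}^{*}$. Combining these with weak duality produces the sandwich
\begin{align*}
J(u^{*})=\mathcal{D}(\mu^{*}_{s,o})\le\mathcal{D}^{*}\le J^{*}\le J(u^{*}),
\end{align*}
which forces every inequality to be an equality. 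Reading off the endpoints gives $J(u^{*})=J^{*}$ (primal optimality of $u^{*}$), $\mathcal{D}(\mu^{*}_{s,o})=\mathcal{D}^{*}$ (dual optimality of $\mu^{*}_{s,o}$), and $\mathcal{D}^{*}=J^{*}$ (zero duality gap). Optimality then transfers back to the original problem \eqref{problem} through the equivalence established in Proposition \ref{prop1}.

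The only genuine subtlety, and the step I would treat most carefully, is well-posedness in the variational/infinite-dimensional setting: the decision variables live in the product of the $\mathcal{L}_{2}(\mathscr{F}_{t})$ spaces, so I must ensure that $J,J_{s},J_{o}$, and hence $\mathcal{L}$ and $\mathcal{D}$, are all finite-valued, so that the inequality chain is meaningful and no indeterminate $\infty-\infty$ cancellation occurs when the constraint terms are dropped. This is precisely what Assumption \ref{as1} (finite fourth moments) secures, via the reformulation in Proposition \ref{prop1}, which guarantees the objective and the two constraint functionals are finite over $\mathcal{U}$. Everything else is the routine book-keeping of the weak-duality argument.
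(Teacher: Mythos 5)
Your proof is correct, and it coincides with the paper's approach: the paper omits the argument entirely, citing it as a direct application of Theorem 4.10 in \cite{ruszczynski2011nonlinear}, which is exactly the weak-duality/saddle-point sufficiency argument you spell out (Lagrangian minimization plus feasibility plus complementary slackness collapsing the chain $J(u^{*})=\mathcal{D}(\mu^{*}_{s,o})\le\mathcal{D}^{*}\le J^{*}\le J(u^{*})$). Your additional remarks on finiteness via Assumption \ref{as1} and on transferring optimality to \eqref{problem} through Proposition \ref{prop1} are consistent with how the paper frames the result.
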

The proof of \textit{Theorem \ref{thm1}} is omitted as it follows as direct application of Theorem 4.10~in~\cite{ruszczynski2011nonlinear}. Within our context,  \textit{Theorem \ref{thm1}} provides justification of the Lagrangian relaxation approach we take hereafter in regard to solving Problem \eqref{problem}. In particular, by choosing certain values for the multipliers $\mu_{s,o}$, evaluation of the constraints of \eqref{problem} determine respective values for $\bar{\epsilon}_{s,o}$, which in most cases satisfy the conditions of \textit{Theorem \ref{thm1}}. In other words, \textit{Theorem \ref{thm1}} may be used `in reverse', as a verification device, or even to tabulate certain tolerance/multiplier pairs which lead to good performance. Also note that, for every optimal solution of the corresponding Lagrangian \eqref{lag}, the resulting constraint values can be efficiently evaluated recursively using standard LQ theory, along the lines of \cite{tsiamis2021linear}.
\subsection{Optimal risk-averse control policies}
Let $\mu_{s,o}>0$ be arbitrary but fixed. First, we may simplify the Lagrangian $\mathcal{L}$ and express it within a canonical
dynamic programming framework. In this respect, we have
the following result.
\begin{lem}[\textbf{Lagrangian Reformulation}]
For every $u_{t}\in\mathcal{L}_{2}(\mathscr{F}_{t})$, $t\leq{N-1}$, the Lagrangian function $\mathcal{L}$ can be expressed as
\begin{equation}
\begin{aligned}
\mathcal{L}(u;\mu_{s,o})&{=} \mathbb{E}\left\{{g_{N}(x_{N};\mu_{s,o})}{+}\sum_{t=0}^{N-1} g_{t}(x_{t},u_{t};\mu_{s,o})\right\}{+}g_{\mu_{s,o}}\\ \text {\normalfont s.t. }&x_{t+1}=Ax_{t}+Bu_{t}+w_{t+1},
\\
& u_{t} \in \mathcal{L}_{2}\left(\mathscr{F}_{t}\right), t=0, \ldots, N-1 ,
\end{aligned}
\label{problem3}
\end{equation}
where
\begin{align}
g_{N}(x_{N};\mu_{s,o})&\triangleq x_{N}^{\top}Q_{\mu_{s,o}} x_{N}+x^{\top}_{N}\mathrm{M}_{\mu_{s,o}},\nonumber\\[5pt]
g(x_{t},u_{t};\mu_{s,o})&\triangleq x_{t}^{\top}Q_{\mu_{s,o}}x_{t}+x^{\top}_{t}\mathrm{M}_{\mu_{s,o}}+u^{\top}_{t}Ru_{t},\nonumber\\[5pt]
g_{0}(x_{0};\mu_{s,o})&\triangleq-x^{\top}_{0}(Q_{\mu_{s,o}}-Q)x_{0}-x^{\top}_{0}\mathrm{M}_{\mu_{s,o}},\nonumber
\end{align}
and 
\begin{align}
\hspace{-15pt}Q_{\mu_{s,o}}&\triangleq Q+4\mu_{s}Q_{s}WQ_{s}+4\mu_{o}C^{\top}Q_{o}PQ_{o}C\nonumber\\[5pt]
\hspace{-10pt}\mathrm{M}_{\mu_{s,o}}&\triangleq4\mu_{s}Q_{s}\mathrm{M}_{w}+4\mu_{o}(C^{\top}Q_{o}\mathrm{M}+2C^{\top}Q_{o}PQ_{o}\bar{\epsilon}),\label{mstat}
\end{align}
where 
\begin{align}
\mathrm{M}=\mathrm{M}_{w\epsilon}+\mathrm{M}_{\epsilon}\label{me}
\end{align}
\end{lem}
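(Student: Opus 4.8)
The plan is to treat the claim as an exact algebraic regrouping of the Lagrangian, so the proof is computational rather than analytic. I would begin from the definition \eqref{lag} and substitute for $J_{s}$ and $J_{o}$ their equivalent quadratic-plus-linear surrogates from the TQCQP \eqref{qcqp} of Proposition \ref{prop1}. This immediately writes $\mathcal{L}(u;\mu_{s,o})$ as a single expectation of a sum of quadratic and linear forms in $x_{t}$ together with the input penalties $u_{t}^{\top}Ru_{t}$, plus the purely deterministic remainder $-\mu_{s}\bar\epsilon_{s}-\mu_{o}\bar\epsilon_{o}$, which I would set aside and name $g_{\mu_{s,o}}$.

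The central step is to fuse three sums whose index ranges disagree: the running LQ cost over $t=0,\dots,N-1$, the terminal penalty at $t=N$, and the two reformulated risk penalties, both of which run over $t=1,\dots,N$. Fixing an index $t$, I would collect the matrix weighting the form $x_{t}^{\top}(\cdot)x_{t}$ and the vector weighting the linear-in-$x_{t}$ term. For every interior time $t=1,\dots,N-1$ the quadratic weight is $Q+4\mu_{s}Q_{s}WQ_{s}+4\mu_{o}C^{\top}Q_{o}PQ_{o}C$, which is exactly the inflated penalty $Q_{\mu_{s,o}}$ of \eqref{mstat}, and, appended with $u_{t}^{\top}Ru_{t}$, this reproduces the generic stage cost $g(x_{t},u_{t};\mu_{s,o})$. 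The same combination with no input term at $t=N$ yields the terminal cost $g_{N}(x_{N};\mu_{s,o})$. The only mismatch left is the initial index $t=0$, which carries LQ weight $Q$ but no risk penalty since the penalties start at $t=1$; to homogenize the running sum into $\sum_{t=0}^{N-1}g(x_{t},u_{t};\mu_{s,o})$ I would add and subtract the missing inflation at $t=0$, the subtracted part being precisely $g_{0}(x_{0};\mu_{s,o})=-x_{0}^{\top}(Q_{\mu_{s,o}}-Q)x_{0}-x_{0}^{\top}\mathrm{M}_{\mu_{s,o}}$, so that $g(x_{0},u_{0};\mu_{s,o})+g_{0}(x_{0};\mu_{s,o})$ collapses back to the bare LQ stage cost $x_{0}^{\top}Qx_{0}+u_{0}^{\top}Ru_{0}$. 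Assembling $g_{N}$, the homogenized running sum, the correction $g_{0}$, and the constant $g_{\mu_{s,o}}$ then gives the representation \eqref{problem3}.

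I expect the genuinely delicate part to be the verification that the aggregated linear-in-$x_{t}$ vector equals $\mathrm{M}_{\mu_{s,o}}$ as stated in \eqref{mstat}. The process contribution $4\mu_{s}Q_{s}\mathrm{M}_{w}$ is read off directly, but the output contribution must be obtained by writing the conditional mean $\mathbb{E}\{y_{t}\mid\overline{\mathscr{F}}_{t-1}\}=C\,\mathbb{E}\{x_{t}\mid\overline{\mathscr{F}}_{t-1}\}+\bar\epsilon$, expanding the squared deviation $[\,||y_{t}||^{2}_{Q_{o}}-\mathbb{E}(||y_{t}||^{2}_{Q_{o}}\mid\overline{\mathscr{F}}_{t-1})\,]^{2}$, and identifying two sources of linear terms: the third-order statistic entering through $C^{\top}Q_{o}\mathrm{M}$, and, crucially, the cross term between $C\,\mathbb{E}\{x_{t}\mid\overline{\mathscr{F}}_{t-1}\}$ and the output-noise mean $\bar\epsilon$ inside the quadratic form $(\cdot)^{\top}Q_{o}PQ_{o}(\cdot)$, which produces the extra summand $2C^{\top}Q_{o}PQ_{o}\bar\epsilon$. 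Keeping straight the decomposition $\mathrm{M}=\mathrm{M}_{w\epsilon}+\mathrm{M}_{\epsilon}$ of \eqref{me}, the cross-covariance $H$ hidden inside $P$, and the relocation of every state-independent remainder (such as traces of the form $\mathrm{tr}(Q_{o}PQ_{o}Z)$) into $g_{\mu_{s,o}}$ is where the bookkeeping is most error-prone; Assumption \ref{as1} is what guarantees that all these moments are finite, so that each rearrangement and each interchange of expectation with summation is legitimate.
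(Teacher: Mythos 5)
Your proposal is correct and follows exactly the paper's (one-line) argument: substitute the TQCQP surrogates of Proposition \ref{prop1} for $J_{s}$ and $J_{o}$ in the Lagrangian \eqref{lag}, regroup the quadratic weights into $Q_{\mu_{s,o}}$ and the linear weights into $\mathrm{M}_{\mu_{s,o}}$, and fix the index mismatch at $t=0$ via the correction $g_{0}$ while pushing all state-independent constants into $g_{\mu_{s,o}}$. You in fact supply the bookkeeping the paper omits, including the correct identification of the $2C^{\top}Q_{o}PQ_{o}\bar{\epsilon}$ term that appears in \eqref{mstat} and in the appendix derivation but is dropped (apparently a typo) from the constraint as displayed in \eqref{qcqp}.
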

\begin{proof}
It follows from \textit{Proposition \ref{prop1}} and the form of $\mathcal{L}$.
\end{proof}

\begin{thm}[\textbf{Optimal Risk-averse Controls}] \label{th2}
Let Assumption \ref{as1} be in effect, and opt $\mu_{s,o}>0$. Then for all $t\in \mathscr{I}$, $\mathscr{I}=\{1,\dots,N\}$, $N\in\mathbb{N}$,
the optimal cost-to-go $\mathcal{L}^{*}(\mathscr{F}_{t};\mu_{s,o})$ can be expressed as 
\begin{align}
\mathcal{L}^{*}(\mathscr{F}_{t};\mu_{s,o})&=\widehat{x}^{\top}_{t|t}V_{t}\widehat{x}_{t|t}+2\widehat{x}^{\top}_{t|t}T_{t}\bar{w}+\widehat{x}^{\top}_{t|t}S_{t}\mathrm{M}_{\mu_{s,o}}+p_{t}.\nonumber
\end{align}
The risk-averse optimal controller reads
\begin{align}
\hspace{-7pt}u^{*}_{t-1}(\mu_{s,o})&=K_{t-1}\widehat{x}_{t-1|t-1}+h_{t-1}+l_{t-1},\label{policy}
\end{align}
where 
\begin{align}
K_{t-1}&=-\left(B^{\top} V_{t} B+R\right)^{-1}B^{\top}V_{t}A,\nonumber\\
h_{t-1} &=-\left(B^{\top} V_{t} B+R\right)^{-1} B^{\top}\left(V_{t}+T_{t}\right) \bar{w}\nonumber \\
l_{t-1} &=-\frac{1}{2}\left(B^{\top} V_{t} B+R\right)^{-1} B^{\top} S_{t} \mathrm{M}_{\mu_{s,o}}.\nonumber
\end{align}
The following backward recursions
\begin{align}
 V_{t-1}&=A^{\top}V_{t}A+Q_{\mu_{s,o}}-A^{\top}V_{t}B(B^{\top}V_{t}B+R)^{-1}B^{\top}V_{t}A\nonumber\\
 K_{t-1}&=-(B^{\top}V_{t}B{+}R)^{-1}B^{\top}V_{t}A\nonumber\\
 T_{t-1}&=(V_{t}+T_{t})(A+BK_{t-1})\nonumber\\
 S_{t-1}&=S_{t}(A+BK_{t-1})+I\nonumber\\
 p_{t-1}&=p_{t}-
 (h_{t-1}+l_{t-1})^{\top}(B^{\top}V_{t}B+R)(h_{t-1}+l_{t-1})\nonumber\\[5pt]
 &\hspace{10pt}+\bar{w}^{\top}(V_{t}+2T_{t})\bar{w}+\mathrm{M}^{\top}S_{t}\bar{w}+\mathrm{tr}(Q_{\mu_{s,o}}{\Sigma}_{t|t})\nonumber\\[5pt]
 &\hspace{10pt}-2\mathrm{tr}(V_{t}H_{t|t-1})+\mathrm{tr}(A^{\top}V_{t}A\Sigma_{t-1|t-1})+\mathrm{tr}(V_{t}W)\nonumber,
\end{align}
hold, starting from $V_{N}{=}Q_{\mu_{s,o}}$, $S_{N}{=}I$, $T_{N}{=}0$, and $p_{N}{=}\mathrm{tr}(Q_{\mu_{s,o}}\Sigma_{N})$. 
\end{thm}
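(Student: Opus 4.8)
The statement is a finite-horizon dynamic programming result, so the plan is to prove it by backward induction on $t$, taking as inductive hypothesis precisely the quadratic-affine ansatz $\mathcal{L}^{*}(\mathscr{F}_{t};\mu_{s,o})=\widehat{x}^{\top}_{t|t}V_{t}\widehat{x}_{t|t}+2\widehat{x}^{\top}_{t|t}T_{t}\bar{w}+\widehat{x}^{\top}_{t|t}S_{t}\mathrm{M}_{\mu_{s,o}}+p_{t}$, with the cost-to-go built from the reformulated Lagrangian of the preceding Lemma. For the base case $t=N$ I would evaluate $\mathbb{E}\{g_{N}(x_{N};\mu_{s,o})\mid\mathscr{F}_{N}\}$; writing $x_{N}=\widehat{x}_{N|N}+e_{N}$ and using $\mathbb{E}\{e_{N}\mid\mathscr{F}_{N}\}=0$ together with $\mathbb{E}\{e_{N}e_{N}^{\top}\mid\mathscr{F}_{N}\}=\Sigma_{N|N}$ yields $V_{N}=Q_{\mu_{s,o}}$, the affine coefficients $S_{N}=I$, $T_{N}=0$, and the constant $p_{N}=\mathrm{tr}(Q_{\mu_{s,o}}\Sigma_{N|N})$, matching the stated initialization.

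For the inductive step I would invoke the Bellman recursion $\mathcal{L}^{*}(\mathscr{F}_{t-1})=\min_{u_{t-1}\in\mathcal{L}_{2}(\mathscr{F}_{t-1})}\mathbb{E}\{g(x_{t-1},u_{t-1};\mu_{s,o})+\mathcal{L}^{*}(\mathscr{F}_{t})\mid\mathscr{F}_{t-1}\}$ and reduce the conditional expectation to the filtered estimate using three facts: (i) the prediction identity $\widehat{x}_{t|t-1}=A\widehat{x}_{t-1|t-1}+Bu_{t-1}+\bar{w}$, the sole channel through which $u_{t-1}$ enters; (ii) the tower property $\mathbb{E}\{\widehat{x}_{t|t}\mid\mathscr{F}_{t-1}\}=\widehat{x}_{t|t-1}$, so that the affine terms pass through cleanly while the quadratic term incurs the trace correction $\mathrm{tr}(V_{t}\,\mathbb{E}\{(\rho_{t}-e_{t})(\rho_{t}-e_{t})^{\top}\mid\mathscr{F}_{t-1}\})$; and (iii) the decomposition $\rho_{t}=Ae_{t-1}+(w_{t}-\bar{w})$, which gives $\mathbb{E}\{\rho_{t}\rho_{t}^{\top}\mid\mathscr{F}_{t-1}\}=A\Sigma_{t-1|t-1}A^{\top}+W$ and, with the definitions of $\Sigma_{t|t-1}$ and $H_{t|t-1}$, produces the covariance bookkeeping. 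The running cost is handled identically via $\mathbb{E}\{x_{t-1}^{\top}Q_{\mu_{s,o}}x_{t-1}\mid\mathscr{F}_{t-1}\}=\widehat{x}^{\top}_{t-1|t-1}Q_{\mu_{s,o}}\widehat{x}_{t-1|t-1}+\mathrm{tr}(Q_{\mu_{s,o}}\Sigma_{t-1|t-1})$.

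Once the conditional objective is expressed through $\widehat{x}_{t-1|t-1}$ and $u_{t-1}$, it is a strictly convex quadratic in $u_{t-1}$ with Hessian $2(B^{\top}V_{t}B+R)\succ0$, since $R\succ0$ and $V_{t}\succeq0$ (the latter because $Q_{\mu_{s,o}}\succeq0$, as $W$ and $P$ are covariances, and the Riccati map preserves positive semidefiniteness). Setting the gradient to zero and substituting $\widehat{x}_{t|t-1}=A\widehat{x}_{t-1|t-1}+Bu_{t-1}+\bar{w}$ delivers the affine minimizer $u^{*}_{t-1}=K_{t-1}\widehat{x}_{t-1|t-1}+h_{t-1}+l_{t-1}$ with exactly the three stated coefficients, where $K_{t-1}$ carries the state feedback, $h_{t-1}$ the mean drift $\bar{w}$, and $l_{t-1}$ the third-order statistics in $\mathrm{M}_{\mu_{s,o}}$. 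Back-substituting and completing the square, I would then read off the coefficients of the ansatz: the pure quadratic term yields the Riccati recursion for $V_{t-1}$, the $\bar{w}$-linear term yields $T_{t-1}=(V_{t}+T_{t})(A+BK_{t-1})$, the $\mathrm{M}_{\mu_{s,o}}$-linear term yields $S_{t-1}=S_{t}(A+BK_{t-1})+I$ (the $+I$ coming from the direct running penalty), and the scalar remainder assembles $p_{t-1}$. Because the minimizer is affine in the $\mathscr{F}_{t-1}$-measurable quantity $\widehat{x}_{t-1|t-1}$, it lies in $\mathcal{L}_{2}(\mathscr{F}_{t-1})$, which both validates interchanging minimization with expectation (a measurable selection is available explicitly) and exhibits the separation structure.

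The main obstacle I anticipate is the constant-term bookkeeping for $p_{t-1}$: correctly collecting and simplifying all error-covariance contributions --- $\Sigma_{t|t}$, $\Sigma_{t|t-1}$, $\Sigma_{t-1|t-1}$, the cross-covariance $H_{t|t-1}$, and $W$ --- into the claimed recursion. This is delicate because in the non-Gaussian regime these conditional covariances are themselves $\mathscr{F}$-measurable random objects rather than deterministic matrices, so the recursion must be read conditionally and the tower property (e.g.\ $\mathbb{E}\{\Sigma_{t|t}\mid\mathscr{F}_{t-1}\}=\Sigma_{t|t-1}$) invoked to reconcile the indices appearing in the trace terms. Assumption \ref{as1} guarantees that all fourth-order quantities feeding $\mathrm{M}_{\mu_{s,o}}$ and the traces are finite, so every expression remains well defined throughout the induction, closing the argument.
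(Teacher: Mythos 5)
Your proposal follows essentially the same route as the paper's proof: backward dynamic programming on the quadratic--affine ansatz, the base case at $t=N$ via $x_{N}=\widehat{x}_{N|N}+e_{N}$ and $\mathbb{E}\{e_{N}\mid\mathscr{F}_{N}\}=0$, minimization of the strictly convex quadratic in $u_{t-1}$, and completion of squares to read off the recursions for $V_{t-1}$, $K_{t-1}$, $T_{t-1}$, $S_{t-1}$, $p_{t-1}$. Your bookkeeping of the conditional expectation through $\widehat{x}_{t|t}=\widehat{x}_{t|t-1}+(\rho_{t}-e_{t})$, together with $\rho_{t}=Ae_{t-1}+(w_{t}-\bar{w})$, is algebraically equivalent to the paper's chain $\widehat{x}_{t|t}=x_{t}-e_{t}$, $x_{t}=Ax_{t-1}+Bu_{t-1}+w_{t}$, $x_{t-1}=\widehat{x}_{t-1|t-1}+e_{t-1}$, and produces the same trace corrections; your explicit remark that the conditional covariances are random objects reconciled via the tower property ($\mathbb{E}\{\Sigma_{t|t}\mid\mathscr{F}_{t-1}\}=\Sigma_{t|t-1}$) is in fact handled more carefully in your plan than in the paper's own computation.

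There is, however, one step you assert without justification, and it is the conceptually nontrivial one: that the prediction identity is ``the sole channel through which $u_{t-1}$ enters,'' i.e., that the errors $e_{t}$, $\rho_{t}$ --- and hence $\Sigma_{t|t}$, $\Sigma_{t|t-1}$, $H_{t|t-1}$, and the (random) constant $p_{t}$ --- are statistically independent of the control inputs. This is precisely the paper's Lemma \ref{lem2}, and it is not automatic in the non-Gaussian, partially observed setting: $\widehat{x}_{t|t}$ is a nonlinear functional of the observations, and the claim requires showing that $\mathscr{F}_{t}=\sigma(y_{0:t},u_{0:t-1})$ coincides with the sigma algebra generated by the control-free outputs $y^{r}_{0:t}$, which the paper establishes by splitting $x_{t}=x^{r}_{t}+x^{d}_{t}$ and proving both inclusions for deterministic policies. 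Without this fact, the trace terms (and $\mathbb{E}\{p_{t}\mid\mathscr{F}_{t-1}\}$) could not be treated as constants in the infimum over $u_{t-1}$, the minimizer would not be the stated affine law, and the induction would not close --- so you must either invoke Lemma \ref{lem2} explicitly or reproduce its filtration argument. Everything else in your plan is correct and matches the paper's proof.
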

Before sketching the proof of \textit{Theorem \textbf{\ref{th2}}} we provide the following important lemma
\begin{lem}\label{lem2}
The estimation error $e_{t}$ and the prediction error $\rho_{t}$ are statistically independent of the controls.
\end{lem}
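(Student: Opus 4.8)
The plan is to prove statistical independence of the estimation and prediction errors from the control inputs by exploiting the structure of the minimum mean square error (mmse) estimator together with the information structure imposed by the constraint $u_t\in\LL_2(\mathscr{F}_t)$. The key observation is that the controls are, by construction, measurable functions of the observable history encoded in $\mathscr{F}_t$, whereas the errors $e_t$ and $\rho_t$ represent precisely the part of the state that the filter \emph{cannot} resolve from that history. My strategy is therefore to show that each error is orthogonal to, and in fact independent of, the $\sigma$-algebra generated by the observations, and hence of any admissible control built from it.

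First I would recall that, by the definitions in \eqref{est}, $\rho_t=x_t-\widehat{x}_{t|t-1}$ with $\widehat{x}_{t|t-1}=\mathbb{E}\{x_t\mid\mathscr{F}_{t-1}\}$, and likewise $e_t=x_t-\widehat{x}_{t|t}$ with $\widehat{x}_{t|t}=\mathbb{E}\{x_t\mid\mathscr{F}_t\}$. The immediate consequence of the mmse (conditional-expectation) property is the orthogonality relations $\mathbb{E}\{\rho_t\mid\mathscr{F}_{t-1}\}=0$ and $\mathbb{E}\{e_t\mid\mathscr{F}_t\}=0$, which are already noted in the text. The second step is to argue that any admissible control $u_{t'}$, $t'\le t-1$, is $\mathscr{F}_{t-1}$-measurable (since $u_{t'}\in\LL_2(\mathscr{F}_{t'})\subseteq\LL_2(\mathscr{F}_{t-1})$ by the nesting of the filtration). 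Thus conditioning on $\mathscr{F}_{t-1}$ fixes the entire control sequence up to that point, and the orthogonality $\mathbb{E}\{\rho_t\mid\mathscr{F}_{t-1}\}=0$ shows $\rho_t$ is uncorrelated with every (square-integrable) function of those controls. The analogous statement for $e_t$ follows by conditioning on $\mathscr{F}_t$ and noting $u_{t-1}$ is $\mathscr{F}_{t-1}\subseteq\mathscr{F}_t$-measurable.

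To upgrade uncorrelatedness to genuine \emph{independence}, I would invoke the innovations representation of the filter. Under the i.i.d.\ noise model of \eqref{dynamics}--\eqref{out}, the errors $e_t$ and $\rho_t$ are measurable functions of the noise innovations $\{w_s,\epsilon_s\}$ entering after the information in $\mathscr{F}_{t-1}$ has been formed, and the estimates $\widehat{x}_{t|t-1},\widehat{x}_{t|t}$ absorb exactly the $\mathscr{F}$-measurable (observable) component. Crucially, because the dynamics are linear and the controls enter additively through $Bu_t$ with $u_t$ adapted to $\mathscr{F}_t$, the control contribution is entirely captured inside the conditional expectation defining the estimate and therefore \emph{cancels} in the difference $x_t-\widehat{x}_{t|\cdot}$. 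Hence the errors depend only on the exogenous noise sequences and not on the realized control values; since the noises are independent of the $\mathscr{F}$-adapted controls (the controls are deterministic functions of past observations, which are themselves driven by noise strictly prior to the fresh innovations), independence of $e_t,\rho_t$ from the controls follows.

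I expect the main obstacle to be making the cancellation-of-control argument fully rigorous rather than merely heuristic: one must verify that subtracting the conditional mean genuinely removes the control-dependent part, which requires a careful decomposition of $x_t$ into its control-driven (deterministic given $\mathscr{F}$) and noise-driven components and an appeal to the linearity of conditional expectation. A clean way to handle this is to write $x_t$ explicitly via the solution of \eqref{dynamics}, separate the $\sum B u_s$ terms (which are $\mathscr{F}_{t-1}$-measurable for the prediction error, resp.\ $\mathscr{F}_t$-measurable for the estimation error) from the noise convolution, and observe that the $\mathscr{F}$-measurable terms pass outside the conditional expectation and subtract off exactly. The remaining noise-only expressions are then manifestly independent of the controls, completing the proof.
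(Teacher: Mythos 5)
Your first two steps (the orthogonality relations, and the cancellation of the control terms in $x_t-\widehat{x}_{t|\cdot}$) are fine, and the decomposition you propose at the end is exactly the one the paper uses: $x_t=x^d_t+x^r_t$ with $x^d_{t+1}=Ax^d_t+Bu_t$, $x^d_0=0$, and $x^r_{t+1}=Ax^r_t+w_{t+1}$, $x^r_0=x_0$. The genuine gap is in your final claim that, after the control sums cancel, ``the remaining noise-only expressions are manifestly independent of the controls.'' What the cancellation actually yields is $e_t=x^r_t-\mathbb{E}\{x^r_t\mid\mathscr{F}_t\}$ (similarly for $\rho_t$), and this is \emph{not} manifestly control-free: the conditioning $\sigma$-algebra $\mathscr{F}_t=\sigma(y_{0:t},u_{0:t-1})$ is generated by the actual outputs $y_s=Cx^r_s+Cx^d_s+\epsilon_s$, which still contain the control-driven components $Cx^d_s$, so a priori both the information held by the filter and the resulting conditional expectation could vary with the policy. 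The entire content of the paper's proof is to close precisely this gap: it establishes the two-sided $\sigma$-algebra identity $\mathscr{F}_t=\sigma(y^r_{0:t})$, where $y^r_t=Cx^r_t+\epsilon_t$ is the control-free output. The easy inclusion $\sigma(y^r_{0:t})\subseteq\mathscr{F}_t$ holds because $y^d_t$ is a function of past controls; the nontrivial inclusion $\mathscr{F}_t\subseteq\sigma(y^r_{0:t})$ requires the recursive argument that, under an admissible adapted policy $\pi$, the control-driven output components satisfy $y^d_n=\varphi_n(y^r_{0:n-1})$ for deterministic maps $\varphi_n$, so that the whole tuple $(y_{0:t},u_{0:t-1})$ is a deterministic function of $y^r_{0:t}$. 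Only then can one write $e_t=x^r_t-\mathbb{E}\{x^r_t\mid\sigma(y^r_{0:t})\}$, a random variable built solely from control-free processes and hence identical for every admissible policy; this invariance is what licenses pulling terms such as $\mathrm{tr}(Q_{\mu_{s,o}}\Sigma_{k|k})$ and $\mathrm{tr}(V_t H_{t|t-1})$ out of the minimizations in the dynamic program. Without the $\sigma$-algebra identity, your argument stops one step short of the conclusion.

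A secondary inaccuracy: your ``innovations'' justification asserts that $e_t$ and $\rho_t$ are functions of the noise entering \emph{after} the information in $\mathscr{F}_{t-1}$ has been formed. That is true under full state observation (where $\rho_t=w_t-\bar{w}$), but false here: with partial observations the errors accumulate contributions from the entire noise history $w_{0:t},\epsilon_{0:t}$ — whatever the observations fail to resolve. Relatedly, for non-Gaussian noise the error $e_t$ is in general \emph{not} probabilistically independent of $\mathscr{F}_t$-measurable quantities (it is only mean-orthogonal to them), so the lemma's conclusion should be read — and is used in the proof of Theorem \ref{th2} — as policy-invariance of the error processes, which is exactly what the paper's $\sigma$-algebra argument delivers and what your proposed route does not reach.
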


\begin{proof}(\textit{Lemma \textbf{\ref{lem2}}})
See Appendix \ref{lemma2appendix}.
\end{proof}
\begin{proof}(\textit{Theorem \textbf{\ref{th2}}}) See \textit{Appendix} \ref{th2appendix}.
\end{proof}
We note that the separation principle holds here, in the sense that
the optimal control policy is affine w.r.t. the mmse state estimates. The policy parameters can thus be computed independently of the state estimate. In fact, the term $l_{t}$ forces the state to follow  $\mathrm{M}_{\mu_{s,o}}$ which, according to \eqref{mstat},  \eqref{me}, combines $\mathrm{M}_{w}$, $\mathrm{M}_{w\epsilon}$, and $\mathrm{M}_{\epsilon}$. While $\mathrm{M}_{w}$ accounts only for the process noise risky events, $\mathrm{M}_{w\epsilon}$ pre-compensates for the cross third order statistics between the output and process noise, the latter as seen from the output. Further, $\mathrm{M}_{\epsilon}$ considers the pure third order statistics of the output noise.
At this point, we note that the first constraint itself might not be sufficient to account for the variability of the output. In particular, as seen by the form of the optimal controller, the second constraint is required in order to counteract the output noise (term $M_{\epsilon}$ in $M$).
Both constraints induce an extra inflation to the state penalty modifying the Riccati equation and subsequently the policy gain accordingly. As a result, the state is regulated more strictly in riskier directions, where the
process and output noise covariance, cross-covariance, and the corresponding penalties are simultaneously large.
In the following, we show that the optimal controller~\eqref{policy} is internally stable under the following standard assumption.
\begin{assumption}\label{as2}
The pair $(A,B)$ is stabilizable, the matrix $Q$ is positive semi-definite and the matrix $R$ is positive definite.
\end{assumption}

\begin{proposition}
[\textbf{Stability}]
Let Assumptions \ref{as1} and \ref{as2} be in effect. For a fixed pair $\mu_{s,o}\geq0$ consider the control policy $u^{*}(\mu_{s,o})$, as defined in \eqref{policy}. As $N\rightarrow\infty$, $V_{t}$ converges exponentially fast to the unique stabilizing solution of the algebraic Riccati equation.
\begin{align}
&V=A^{\top}VA+Q_{\mu_{s,o}}-A^{\top}VB(B^{\top}VB+R)^{-1}B^{\top}VA.\nonumber
\end{align}
Thus, for every $t\geq0$ it is true that, as $N\rightarrow\infty$,
\begin{align}
K_{t}&\rightarrow{K}\triangleq-(B^{\top}VB+R)^{-1}B^{\top}VA,\nonumber\\[5pt]
S_{t}&\rightarrow{S}\triangleq(I-(A+B K))^{-1},\nonumber\\[5pt]
T_{t}&\rightarrow{T}\triangleq{V}(A+BK)(I-(A+BK))^{-1},\nonumber\\[5pt]
h_{t-1}&\rightarrow{h}\triangleq -\left(B^{\top} V B+R\right)^{-1} B^{\top}\left(V+T\right) \bar{w},\nonumber \\[5pt]
l_{t-1}&\rightarrow{l}\triangleq -\frac{1}{2}\left(B^{\top} V B+R\right)^{-1} B^{\top} S \mathrm{M}_{\mu_{s,o}}.\nonumber
\end{align}
\end{proposition}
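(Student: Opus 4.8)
The plan is to recognize that the backward recursion for $V_t$ in \textit{Theorem \ref{th2}} is exactly the classical discrete-time Riccati difference equation (DRE) associated with the deterministic LQ problem with state penalty $Q_{\mu_{s,o}}$ and input penalty $R$, and that every remaining recursion ($K_t,S_t,T_t,h_{t-1},l_{t-1}$) is driven by $V_t$ through the closed-loop matrix $A+BK_t$. Accordingly I would split the argument into two stages: (i) convergence of the Riccati iterate $V_t$, and (ii) convergence of the auxiliary quantities once $V_t$, equivalently $K_t$, has converged.

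For stage (i) I would first verify that the inflated penalty is positive semidefinite: $Q_{\mu_{s,o}}=Q+4\mu_s Q_s W Q_s+4\mu_o C^\top Q_o P Q_o C\succeq Q\succeq 0$, since each inflation term is a congruence of a covariance-type PSD matrix ($W$, respectively $P=\mathrm{Cov}(p_t)$) and is therefore PSD for every $\mu_{s,o}\geq 0$. Together with $R\succ 0$ and stabilizability of $(A,B)$ from \textit{Assumption \ref{as2}}, and using detectability of $(A,Q_{\mu_{s,o}}^{1/2})$ (which follows from $Q_{\mu_{s,o}}\succeq Q$ whenever $(A,Q^{1/2})$ is detectable, as the unobservable subspace of $(A,Q_{\mu_{s,o}}^{1/2})$ is then contained in the stable unobservable subspace of $(A,Q^{1/2})$), I would invoke the standard DRE convergence theorem, in the spirit of \cite{tsiamis2020risk}: starting from the PSD terminal condition $V_N=Q_{\mu_{s,o}}$, the iterate $V_t$ converges as $N\to\infty$ to the unique symmetric PSD stabilizing solution $V$ of the stated ARE, the closed-loop matrix $A+BK$ is Schur-stable, and the convergence is exponential. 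Continuity of the gain map then gives $K_t\to K=-(B^\top V B+R)^{-1}B^\top V A$ exponentially fast.

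For stage (ii) I would treat the recursions for $S_t$ and $T_t$ as asymptotically time-invariant linear iterations driven by $A+BK_{t-1}$. From $S_{t-1}=S_t(A+BK_{t-1})+I$ and $T_{t-1}=(V_t+T_t)(A+BK_{t-1})$, and using $A+BK_{t-1}\to A+BK$ (Schur) exponentially, I would show these converge to the fixed points $S=(I-(A+BK))^{-1}$ and $T=V(A+BK)(I-(A+BK))^{-1}$; the inverse is well defined because $A+BK$ Schur implies $1\notin\mathrm{spec}(A+BK)$, and the exponential rate is inherited from $\rho(A+BK)<1$. Finally, $h_{t-1}$ and $l_{t-1}$ are continuous (affine) functions of $V_t,T_t,S_t$, so they converge to the stated limits $h$ and $l$ at the same exponential rate.

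The main obstacle is twofold. First, the delicate point in stage (i) is pinning down the \emph{stabilizing} solution: stabilizability of $(A,B)$ alone yields a bounded monotone Riccati limit, but to single out the stabilizing ARE solution and obtain Schur-stability of $A+BK$ one needs detectability of $(A,Q_{\mu_{s,o}}^{1/2})$, which I would either carry as a standing observability-type hypothesis or derive from $Q_{\mu_{s,o}}\succeq Q$ as above. Second, in stage (ii) the iterations are genuinely time-varying (driven by $A+BK_{t-1}$ rather than the limiting $A+BK$), so convergence requires a perturbation argument for asymptotically time-invariant linear recursions rather than a direct geometric-series estimate; quantifying the exponential rate by combining the decaying perturbation $\|K_{t-1}-K\|$ with the contraction induced by $\rho(A+BK)<1$ is the part demanding the most care.
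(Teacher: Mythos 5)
Your proposal is correct, and its core is exactly the paper's argument: the paper's entire proof consists of noting that $Q_{\mu_{s,o}}\succeq Q$ (the paper actually writes the inequality backwards, ``$Q>Q_{\mu_{s,o}}$'', a typo you implicitly correct), so detectability of $(A,Q^{1/2})$ passes to $(A,Q_{\mu_{s,o}}^{1/2})$, after which Schur-stability of $A+BK$ and convergence of $V_t$ follow from standard LQR/Riccati theory \cite{anderson2012optimal}. Where you go beyond the paper is your stage (ii): the paper never addresses the convergence of $S_t$, $T_t$, $h_{t-1}$, $l_{t-1}$ at all, treating it as an immediate consequence, whereas you correctly identify that these are time-varying linear recursions driven by $A+BK_{t-1}$ and supply the perturbation argument (exponential decay of $\|K_{t-1}-K\|$ combined with $\rho(A+BK)<1$) needed to pass to the stated fixed points $S=(I-(A+BK))^{-1}$ and $T=V(A+BK)(I-(A+BK))^{-1}$; this is a genuine completion of the paper's reasoning rather than a different route. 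One caveat applies equally to you and to the paper: detectability of $(A,Q^{1/2})$ is not literally part of Assumption \ref{as2} (which only asks stabilizability, $Q\succeq 0$, $R\succ 0$), so it must be carried as an extra hypothesis exactly as you flag; without it the claim can fail (e.g., $Q=Q_{\mu_{s,o}}=0$ with $A$ unstable gives $V_t\equiv 0$ and $K=0$, which is not stabilizing).
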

\begin{proof}
Since $Q>Q_{\mu_{s,o}}$, detectability of $(A,Q^{1/2})$ implies detectability of $(A,Q_{\mu_{s,o}}^{1/2})$. That $A+BK$ is Hurwitz follows from standard LQR theory \cite{anderson2012optimal}.
\end{proof}
{Note that the above result only concerns the internal stability of the policy. For the closed-loop system to be well-behaved, we also need the mmse estimator, or any other state estimator, to exhibit bounded mean square estimation error. This can be achieved for example if $(A,Q^{1/2})$ is detectable.}

\section{Simulations and results}
Consider the inverting operational amplifier $S_{c}$ shown in Fig \ref{opamp}
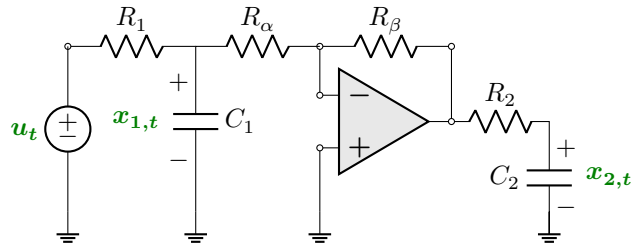
\begin{figure}[h!]
\centering
\begin{circuitikz}[american]
\ctikzset{bipoles/length=1cm}
\draw (0, 0) node[op amp,fill=gray!18] (opamp) {}
(opamp.-) to[short,o-o] (-0.85, 1.0)to[R,l^=$R_{\beta}$](0.9, 1.0)to[short,o-o] (0.9,0)to[R,l^=$R_{2}$](2.2,0)to[C,l_=$C_{2}$,v^=\textcolor{forest}{$\boldsymbol{x_{2,t}}$}](2.2,-1.5)to (2.2,-1.2)node[ground]{};
\draw (-0.85, 1.0)to[R,l_=$R_{\alpha}$](-2.5,1)to
[C,l^=$C_{1}$,v=\textcolor{forest}{$\boldsymbol{x_{1,t}}$}](-2.5,-1)to (-2.5,-1.2)node[ground]{};
\draw (-2.5,1)to[R,l_=$R_{1}$,-o](-4.2,1)
(opamp.+) to[short,o-] (-0.85,-1.2) node[ground]{};
\draw(-4.2,-1.2)node[ground]{}to[V,l^=\textcolor{forest}{$\boldsymbol{u_{t}}$},invert](-4.2,1){};
\end{circuitikz}
\caption{Inverting operational amplifier}
\label{opamp}
\end{figure}
which may be described by the second order state space model 
\begin{align}
\hspace{-10pt}
x_{t+1}&=\left[\begin{array}{cc} 0.172 & 0 \\ 1.046 & 0.8869 \end{array}\right]x_{t}+\left[\begin{array}{c} 0.1882\\0.2762 \end{array}\right]u_{t}+w_{t},\label{discrete}
\end{align}
where we followed \cite[p.~92]{murray} with $R_{1}=0.5$, $R_{2}=1$, $R_{\alpha}=5$, $R_{\beta}=0.25$, $C_{1}=0.5$, $C_{2}=0.3$, and subsequently discretized with $T_{s}=0.4~\mathrm{sec}$. The system state $x=(x_{1},x_{2})$ comprises the two capacitor voltages denoted in Fig. \ref{opamp} by forest green.
 

  \begin{figure}[h!]
      \centering
  \begin{tikzpicture}[
    TFblock/.style= {
        draw,  
        minimum size=1.5cm}]
\node [TFblock,fill=gray!20,] (a) at (0,0) {$S_{i}$};

\node (rect) at (2,1) [draw=black,thick,minimum width=6cm,minimum height=4.5cm,label={[black]30: $S$}] {};

\node[circle,
   draw=black,
   text=black,
   fill=gray!20,
   minimum size=0.01cm
   ] (b) at (2,0) {$+$};
   
\node [TFblock,fill=gray!20,] (c) at (4,0) {$S_{c}$};
   
 \node[circle,
   draw=black,
   text=black,
   fill=gray!20,
   minimum size=0.01cm
   ] (d) at (6,0) {$+$};

\node [TFblock,fill=gray!20,] (f) at (8,0) {$S_{o}$};

\node (g) at (2,2.5) {
\begin{tikzpicture}[scale=0.2]
\begin{axis}[samples=50, xticklabels={}, yticklabels={},
height=6cm, width=15cm,line width=1.5, axis on top, grid = major]
\addplot [fill=white!20, draw=forest,line width=3]
{0.8*gauss(-2,0.5)+0.2*gauss(3,0.8)}
\closedcycle;
\end{axis}
\end{tikzpicture}};

\node (s) at (6,-2.5) {
\begin{tikzpicture}[scale=0.2]
\begin{axis}[samples=50, xticklabels={}, yticklabels={},
height=6cm, width=15cm,line width=1.5, axis on top, grid = major]
\addplot [fill=white!20, draw=forest,line width=3] 
{0.8*gauss(-2,0.5)+0.2*gauss(3,0.8)}\closedcycle;
\end{axis}
\end{tikzpicture}};

\draw [-latex](a.east)edge node[above] {$u_{}$} (b.west)
    (b.north) ++(0,1.5)edge 
    node[right] {$w_{}$} (b.north)
    (b.east) edge 
    node[above] {} (c.west)
    (c.east) edge 
    node[above] {$v_{o}$} (d.west)
    (d.east) edge 
    node[above]{$y$} (f.west)
    (f.west) edge 
    node[above]{} (d.east)
    (s.north) edge 
    node[right]{$\epsilon$} (d.south)++(0,1.53);
\end{tikzpicture}
\caption{Interconnection of inverting op-amp with two terminal stages. The input stage $S_{i}$ refers to $S_{c}$ which has to follow $u$ and transmit $v_{o}$ despite the voltage socks $w_{t}$ and the output disturbances $y_{t}$.}
\label{d1}
  \end{figure}
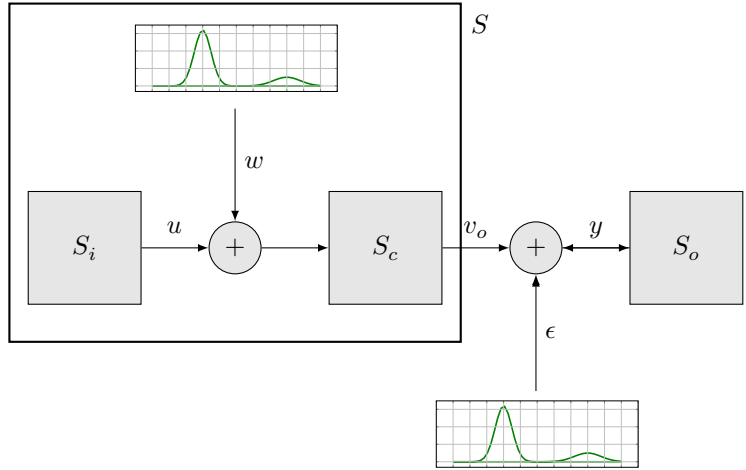
  
Based on the diagram in Fig. \ref{d1}, $S_{c}$ is internally affected from $S_{i}$ through $u$ and it is the part of $S$ that interfaces $S_{o}$ through the output $y$. The voltage of the resistor $R_{2}$ is set as an output and it is also observed for state estimation according to 
\begin{align}
y_{t}&=\left[\begin{array}{cc} 0.05 & -1\end{array}\right]x_{t}+\epsilon_{t},\label{sout}
\end{align}
where $\epsilon_{t}$ expresses random events that occur peripherally around the circuit, in this case in $R_{2}$.
\par 
The objective is to control the average energy storage in $S$ by regulating the internal state to certain voltage levels $x^{*}{=}(0.2117, 0.43995)$ while accounting for unexpected (internal) voltage socks. On top of that, we are interested in maintaining a low output variability subject to peripheral, rare but catastrophic events. Roughly speaking, we are interested in controlling the energy flow in $S$ in a bidirectional manner. On the one hand we  safeguard against risky events that act internally as disturbances in $u_{t}$, but also keep $S$ externally shielded from the unexpected events that occur it the output $y_{t}$, peripherally around $S$. 
\par
The stochastic disturbances $w_{t}$ model, random events within $S$. Assuming nominal operating conditions for $S_{i}$, $S_{c}$ receives symmetric random deviations around $u$, expressed by a Gaussian distribution $w_{t}\sim\mathcal{N}(0,0.1)$. On the contrary, unexpected-risky events that take place in 
$S$, are reflected on $w_{t}$ as `voltage socks' thus modelled as a mixture of two Gaussians $\mathcal{N}(0,0.01)$, and $\mathcal{N}(10,0.001)$ with weights $p_{1}=0.8$ and $p_{2}=0.2$, respectively. Further, we consider two different cases regarding the output disturbance $\epsilon_{t}$. Nominal operating conditions are modeled by $\epsilon_{t}\sim\mathcal{N}(0,0.01)$, while the Gaussian mixture 
$0.7\mathcal{N}(0,0.01)+0.3\mathcal{N}(20,0.005)$ indicates rare but highly undesirable events that reflect on $\epsilon_{t}$ as voltage shocks.
The reader may verify that the system \eqref{discrete}-\eqref{sout} is controllable and observable.
\par
Throughout the simulations we maintain $Q=\mathrm{diag}(1,1)$ and $R{=}1$  for the LQ stage-cost, while we set $Q_{s}{=}Q_{o}{=}\mathrm{diag}(1, 0.1)$ for the state and output risk-constraint. In all cases state estimates are obtained sequentially via a Kalman filter. Lastly, we compare the performance of our controller with the LEQG controller-filter system \cite{whittle}. Lastly, the tracking objective may be recast as regulation objective w.r.t. 
\begin{align}
    \Delta{x}_{t+1}=A\Delta{x}_{t}+B\Delta{u},
\end{align}
where $\Delta{x}_{t}=x_{t}-x^{*}$, $\Delta{u}_{t}=u_{t}-u^{*}$, and $u^{*}$ is such that
$x^{*}=Ax^{*}+Bu^{*}$. For the given $x^{*}$ we choose $Bu^{*}=B$, while the output targets to $y^{*}=-0.4294$. In all cases, a Kalman filter takes care of the mmse state estimation. The functionality of our controller is demonstrated by the following three operating scenarios:
\subsubsection{Case 1. (Skewed process noise-Gaussian output noise)}
In this scenario our risk-averse controller safeguards against internal risky events, while the surrounding conditions, are assumed to be nominal. 
Fig. \ref{case1} (top) depicts  $||x_{t}-x^{*}||^{2}_{Q}$ over the first $100$ time-steps, while at the bottom we plot the energy of the internal voltage socks. 
\begin{figure}[h!]
\centering
\includegraphics[scale=0.17]{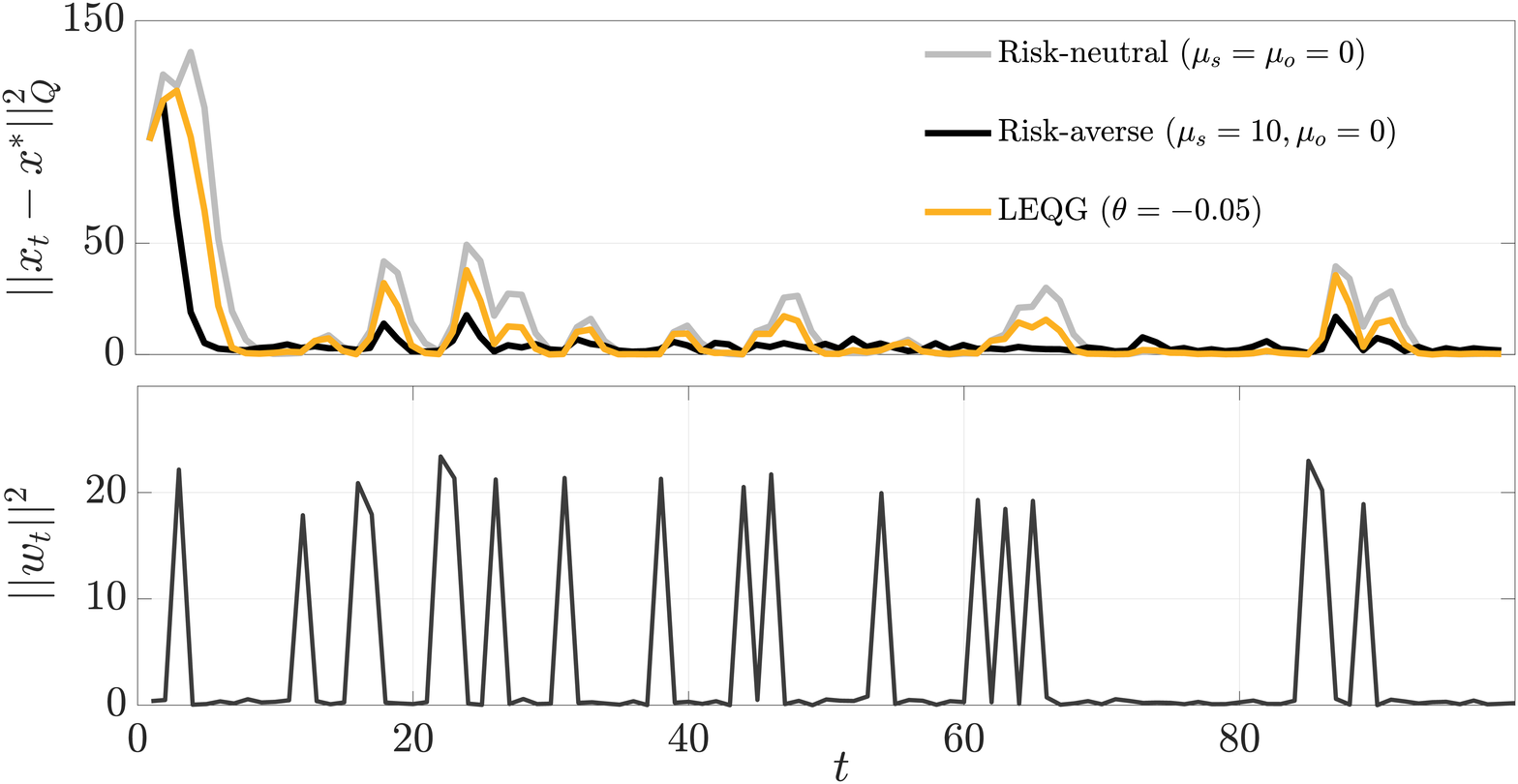}
\caption{State penalty variation $||x_{t}-x^{*}||^{2}_{Q}$ over time considering skewed process noise prior and Gaussian output noise for $S$ (top). Energy of internal voltage shocks over time (bottom).}
\label{case1}
\end{figure}
Note how the risk averse controller with $\mu_{s,o}=(10,0)$ drastically limits the variability of the stored energy, thus protecting $S$ from the internal voltage socks especially from those over $t\in (15,25)\cup(45,50)\cup(60,70)\cup(85,95)$.
\par
Although the LEQG controller performs better compared to the risk-neutral control, it seems to be more susceptible to the incoming voltage socks but with a slightly better average performance compared to the risk-averse controller (black). 
Roughly speaking, the LEQG policy is linear; it does not include affine terms related to third-order statistics and it renders the controller cautious to risky events by only magnifying the Gaussian process noise covariance. Thus, unexpected, risky events, normally modelled by the distribution's tail and mapped as the voltage socks that cause the state to vary aggressively, are not considered by the LEQG controller. Worth noticing that further improvements are not possible given that LEQG experiences the so-called \textit{neurotic breakdown} for $\theta<-0.05$. 
\par Probably one of the main drawbacks of LEQG is that values of $\theta$ that seem to be `stable' for some noise realizations, turn out to be `unstable' for others (especially for those with large shocks). In other words, in order to \textit{guarantee} for a stable value of $\theta$ over all noise realizations, we have to deteriorate the performance of the LEQG controller significantly. To put it differently, compared to our state-output risk-averse controller, LEQG is not of the `set and forget' type.
Lastly note that, on the one hand, state varies more aggressively only when the system experiences successive double or triple shocks. On the other, state experiences the internal shock with some (constant) time delay.
\subsubsection{Case 2. (Gaussian process noise-Skewed measurement noise)}

This case demonstrates the functionality of the second constraint. In particular, internal conditions in $S$ are assumed nominal, thus modelled with a Gaussian process noise, while the output disturbance simulates unexpected-risky events on the resistor voltage $y_{t}$. 
\begin{figure}[h!]
\centering
\includegraphics[scale=0.18]{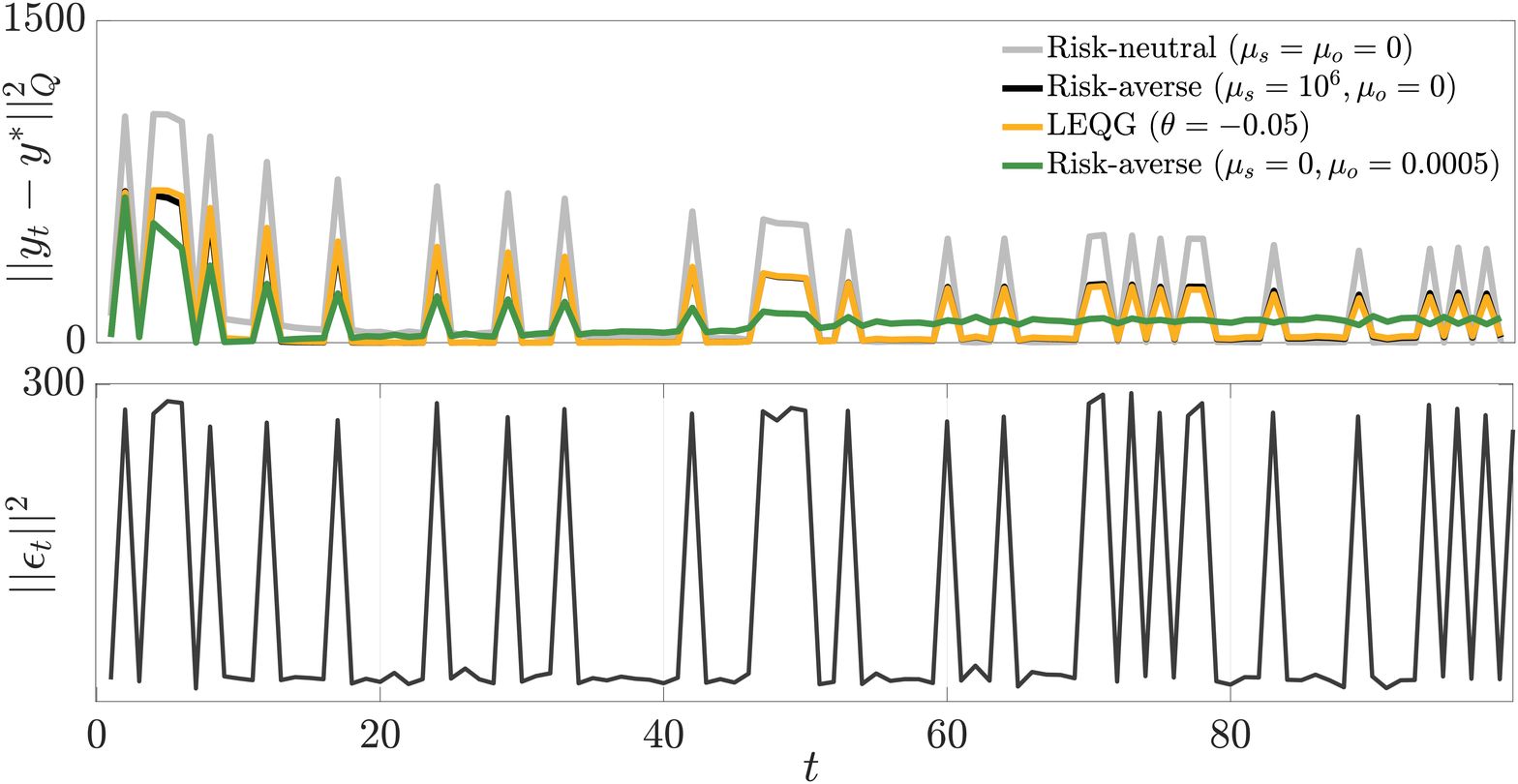}
\caption{Output variation w.r.t. the surrounding shocks $\epsilon_{t}$ for the various control policies (top). Output voltage shocks in one-to-one correspondence with the output variation (bottom).}
\label{c2}
\end{figure}
Figure \ref{c2} (top) shows how the energy on the
resistor $R_{2}$ alters with time. 
First and foremost the resistor's energy remains totally unguarded from the upcoming voltage shocks under the risk-neutral policy.
Note the similar performance between our risk-averse controller with $\mu_{s,o}=(10^{6},0)$, (black) and the LEQG (yellow), which without further improvement in between, breaks down for $\theta=-0.08$. The Gaussian process noise kills the affine terms of the risk-averse policy rendering our controller able to suppress risky events only by the state penalty matrix inflation, recasting its architecture similar to the one that, \textit{by default}, LEQG enjoys. However, our controller remains stable despite the extremes of $\mu_{s}$.
\par
On the other hand, by activating the second constraint (forest green) with $\mu_{s,o}=(0,5\times{10}^{-4})$ the controller counteracts risky events in a direct manner by rendering the resistor's energy inert to the surrounding shocks in the cost of sacrificing average performance. Lastly, note the instant, and more direct influence of the surrounding voltage disturbances (bottom) to the output. 
\subsubsection{Case 3. (Skewed process noise-Skewed measurement noise)}
Fig. \ref{SpSm} (top) shows how the various policies perform when skewed model prior combines with skewed output prior. In order to clearly depict the corresponding resistor's energy variations due to the internal, and output shocks, we separate the latter by choosing $(p_{1},p_{2})=(0.95,0.05)$ for both types of noises. Thus, the first peak (top) is due to $\epsilon_{t}$ (bottom) while the rest are due to the inner ones $w_{t}$ (middle). 
\begin{figure}[h!]
    \centering
    \includegraphics[scale=0.18]{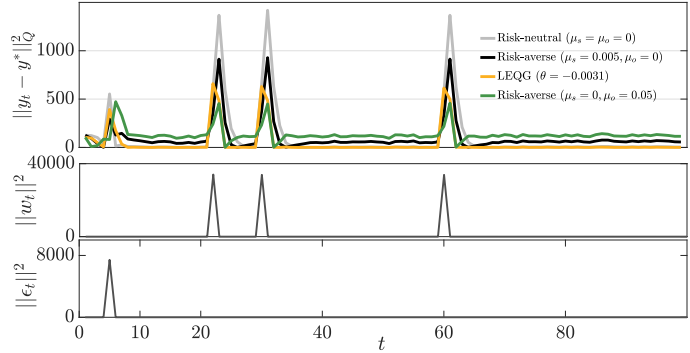}
    \caption{Output varriation due to internal and external disturbances (top). Interrnal shocks (middle), and output shocks (bottom). Note how each type of shock influences the output variation.}
    \label{SpSm}
\end{figure}
By construction, the first constraint (black) with $\mu_{s}=0.005$, reduces the influence of the  internal voltage shocks. On the contrary, the second constraint (forest green) with $\mu_{s,o}=(0,0.05)$ counteracts for the internal and external disturbances equally. Note that a larger value of $\mu_{o}$ can completely flatten the effect of the external disturbance in the cost of increasing effect of the internal disturbances. Lastly, LEQG  with $\theta=-0.0031$ achieves an adequate average performance while sufficiently reduces variability. However, note that the specific value for $\theta$ might be unstable for other noise realizations. Once again, simulations indicate that guaranteed stable values of $\theta$ (over all noise realizations) comes with significant performance deterioration.

\section{Conclusion and future work}
We proposed a new state-output risk-constrained optimal controller
for partially observed linear systems.  Our formulation considers  systems disrupted by arbitrary process and/or output noise (with finite fourth order moments) and the resulting policy admits closed form while it maintains favorable characteristics of the classical LQ strategy, i.e. it can be pre-computed off-line and stored.
By constraining the squared error predictive variability of the state and output penalties, the controller accounts for both internal shocks and also shields against external (i.e., output-induced) risky events. Our numerical results confirmed our theoretical analysis and corroborated the usefulness of state-output constrained control via a realistic indicative example. 
\par 
In addition to those referred in \cite{tsiamis2020risk},
interesting extensions of the proposed problem include but are not limited to risk-averse quadratic state regulation where the risk constraints consider restricting the volatility of the  distance between the control system and an adversarial target, e.g.,
\begin{align}
\mathbb{E}\left\{\sum_{t=1}^{N}\left[||x_{t}-g_{t}||^{2}_{Q_{s}} -\mathbb{E}\left( ||x_{t}-g_{t}||^{2}_{Q_{s}}\big|~ \overline{\mathscr{F}}_{t-1}\right)\right]^{2}\right\} \leq \epsilon,\nonumber
\end{align}
where $g_{t}$ denotes the state of an adversarial target. Such extensions are fruitful topics for further investigation.

\section{Appendix}

\subsection{{Proof of Proposition 1}}\label{proofprop1}

\subsubsection{State risk constraint re-formulation}
Similarly to \cite{tsiamis2020risk}, recall \textit{Assumption \ref{as1}}, and the causality constraint $u_{t}\in\mathcal{L}_{2}(\mathscr{F}_{t})$. We may write
\begin{align}
\tilde{x}_{t}= \mathbb{E}\{x_{t} |\overline{\mathscr{F}}_{t-1}\}&= \mathbb{E}\{Ax_{t-1}+Bu_{t-1}+w_{t} |\overline{\mathscr{F}}_{t-1}\}\nonumber\\[5pt]
&=Ax_{t-1}+Bu_{t-1}+\bar{w}\nonumber\\[5pt]
&=x_{t}-\delta_{t}, \label{tsoutses}
\end{align}
where we declared $\delta_{t}=w_{t}-\bar{w}$. The quadratic term may be written as 
\begin{align}
x^{\top}_{t}Q_{s}x_{t}=\tilde{x}^{\top}_{t}Q_{s}\tilde{x}_{t}+2\tilde{x}^{\top}Q_{s}\delta_{t}+\delta^{\top}_{t}Q_{s}\delta_{t}~,
\end{align}
and therefore
\begin{align}
\mathbb{E}\{x^{\top}_{t}Q_{s}x_{t}|\overline{\mathscr{F}}_{t-1}\}=\tilde{x}^{\top}_{t}Q_{s}\tilde{x}_{t}+\mathrm{tr}(Q_{s}W),
\end{align}
where we defined
\begin{align}
W:=\mathbb{E}\{(w_{t}-\bar{w})(w_{t}-\bar{w})^{\top} \}=\mathbb{E}\{\delta_{t}\delta^{\top}_{t}\}
\end{align}
as the process noise covariance. For the squared difference between the two we obtain 
\begin{align}
\Delta^{2}_{s,t}&=\Big[x^{\top}_{t}Q_{s}x_{t}-\mathbb{E}\big\{x^{\top}_{t}Q_{s}x_{t}|\overline{\mathscr{F}}_{t-1} \big\}\Big]^{2}\nonumber\\
&=\Big[2\tilde{x}^{\top}Q_{s}\delta_{t}+\delta^{\top}_{t}Q_{s}\delta_{t}-\mathrm{tr}(WQ_{s})\Big]^{2}\nonumber\\[5pt]
&=(\delta^{\top}_{t}Q_{s}\delta_{t}-\mathrm{tr}(WQ_{s}))^{2}+4\tilde{x}^{\top}_{t}Q_{s}\delta_{t}(\delta^{\top}_{t}Q\delta_{t}-\mathrm{tr}(WQ_{s}))+4\tilde{x}^{\top}Q_{s}\delta_{t}\delta^{\top}_{t}Q_{s}\tilde{x}_{t}.\label{difference}
\end{align}
At this point we define 
\begin{align}
    \mathrm{M}_{w}&\triangleq\mathbb{E}\Big\{(w_{t}-\bar{w})(w_{t}-\bar{w})^{\top}Q_{s}(w_{t}-\bar{w})\Big\}\nonumber,~\text{and}\\
    m_{w}&\triangleq \mathbb{E}\Big\{\Big((w_{t}-\bar{w})^{\top}Q_{s}(w_{t}-\bar{w})-\mathrm{tr}(Q_{s}W)\Big)^{2}\Big\}.
\end{align}
Thus, 
\begin{align}
\mathbb{E}\{\Delta^{2}_{s,t}\}&=m_{w}+\mathbb{E}\{4\tilde{x}^{\top}_{t}Q_{s}\delta_{t}(\delta^{\top}_{t}Q_{s}\delta_{t}-\mathrm{tr}(WQ_{s}))+4\tilde{x}^{\top}Q_{s}\delta_{t}\delta^{\top}_{t}Q_{s}\tilde{x}_{t} \}\nonumber\\[5pt]
   &= m_{w}+\mathbb{E}\{4\tilde{x}^{\top}_{t}Q_{s}\delta_{t}\delta_{t}Q\delta_{t}\}-\mathbb{E}\{4\tilde{x}^{\top}_{t}Q_{s}\delta_{t}\mathrm{tr}(Q_{s}W)\}+4\mathbb{E}\{\tilde{x}_{t}Q_{s}\delta_{t}\delta^{\top}_{t}Q_{s}\tilde{x}_{t}\}.
   \end{align}
By construction of $\tilde{x}_{t}$ and by first projecting onto $\overline{\mathscr{F}}_{t-1}$, for the first term we obtain
\begin{align}
   \mathbb{E}\{4\tilde{x}^{\top}_{t}Q_{s}\delta_{t}\delta_{t}Q_{s}\delta_{t}\}&=4\mathbb{E}\{ \mathbb{E}\{ \tilde{x}^{\top}_{t}Q_{s}\delta_{t}\delta_{t}Q_{s}\delta_{t}|\overline{\mathscr{F}}_{t-1}\}\}\nonumber\\[5pt]
   &=4\mathbb{E}\{\tilde{x}^{\top}Q_{s}\mathrm{M}_{w}\}
\end{align}
Regarding the second term, $\mathbb{E}\{\delta_{t}\}=0$ implies $\mathbb{E}\{4\tilde{x}^{\top}_{t}Q\delta_{t}\mathrm{tr}(Q_{s}W)\}=0$, while the third one reads 
\begin{align}
  4\mathbb{E}\{\tilde{x}^{\top}_{t}Q_{s}\delta_{t}\delta^{\top}_{t}Q_{s}\tilde{x}_{t}\}&=4\mathbb{E}\{\tilde{x}^{\top}_{t}Q_{s}WQ_{s}\tilde{x}_{t}\}.
\end{align}
Thus, 
\begin{align}
    \mathbb{E}\{\Delta^{2}_{s,t}\}&=m_{w}+\mathbb{E}\{4\tilde{x}^{\top}_{t}Q_{s}WQ_{s}\tilde{x}_{t}+4\tilde{x}_{t}Q_{s}\mathrm{M}_{w}\}\nonumber\\[5pt]
    &=m_{w}+\mathbb{E}\{4x_{t}^{\top}Q_{s}WQ_{s}x_{t}-8x^{\top}_{t}Q_{s}WQ_{s}\delta_{t}+4\delta^{\top}_{t}Q_{s}WQ_{s}\delta_{t}+4x_{t}Q_{s}\mathrm{M}_{w}-4\delta_{t}Q_{s}\mathrm{M}_{w}\}\nonumber\\[5pt]
    &=m_{w}+\mathbb{E}\{4x_{t}^{\top}Q_{s}WQ_{s}x_{t}+4x_{t}Q_{s}\mathrm{M}_{w}
    -8x^{\top}_{t}Q_{s}WQ_{s}\delta_{t}+4\delta^{\top}_{t}Q_{s}WQ_{s}\delta_{t}\}\nonumber\\[5pt]
    &=m_{w}+4(\mathrm{tr}(Q_{s}W))^{2}-8(\mathrm{tr}(Q_{s}W))^{2}+\mathbb{E}\{4x_{t}^{\top}Q_{s}WQ_{s}x_{t}+4x_{t}Q_{s}\mathrm{M}_{w}\}\nonumber\\[5pt]
    &=m_{w}-4(\mathrm{tr}(Q_{s}W))^{2}+\mathbb{E}\{4x_{t}^{\top}Q_{s}WQ_{s}x_{t}+4x_{t}Q_{s}\mathrm{M}_{w}\}
\end{align}
Therefore, the constraint recasts to 
\begin{align}
    \sum^{t=N}_{t=1}\mathbb{E}\{\Delta^{2}_{s,t}\}=\sum^{t=N}_{t=1}\mathbb{E}\{4x_{t}^{\top}Q_{s}WQ_{s}x_{t}+4x_{t}Q_{s}\mathbb{M}_{w}\}+Nm_{4}-4N(\mathrm{tr}(Q_{s}W))^{2}<\epsilon_{s}
\end{align}
or 
\begin{align}
\sum^{t=N}_{t=1}\mathbb{E}\{4x_{t}^{\top}Q_{s}WQ_{s}x_{t}+4x_{t}Q_{s}\mathrm{M}_{w}\}<\bar{\epsilon}_{s}
\end{align}
whith $\bar{\epsilon}_{s}=\epsilon_{s}-Nm_{w}+4N(\mathrm{tr}(Q_{s}W))^{2}$.

\subsubsection{Output risk constraint re-formulation}

Tore-formulate the second constraint, recall \textit{Assumption \ref{as1}}, the causality constraint $u_{t}\in\mathcal{L}_{2}(\FF_{t})$, and consider \eqref{out}, along with \eqref{tsoutses}. Therefore, we may write
$\tilde{y}_{t}=y_{t}-p_{t}$, with $p_{t}=C[w_{t}-\bar{w}]+\epsilon_{t}-\bar{\epsilon}$, and $\mathbb{E}\{p_{t}\}=0$, and subsequently write the quadratic as 
\begin{align}
    y^{\top}_{t}Q_{o} y_{t}=\tilde{y}^{\top}Q_{o} \tilde{y}_{t}+2\tilde{y}^{\top}_{t}Q_{o} p_{t}+p^{\top}_{t}Q_{o}p_{t}.
\end{align}
Hence, 
\begin{align}
    \mathbb{E}\{y^{\top}_{t}Q_{o}y_{t}|\overline{\mathscr{F}}_{t-1}\}&=\tilde{y}^{\top}Q_{o}\tilde{y}_{t}+\mathbb{E}\{p^{\top}_{t}Q_{o}p_{t}\}\nonumber\\[5pt]
    &=\tilde{y}^{\top}Q_{o}\tilde{y}_{t}+\mathrm{tr}(Q_{o}P).
\end{align}
where $P\triangleq\mathbb{E}\{p_{t}p^{\top}_{t}\}$. The squared difference reads
\begin{align}
\Delta^{2}_{o,t}&=\Big[y^{\top}_{t}Q_{o}y_{t}-\mathbb{E}\big\{y^{\top}_{t}Q_{o}y_{t}|\overline{\mathscr{F}}_{t-1} \big\}\Big]^{2}\nonumber\\
&=\Big[2\tilde{y}_{t}^{\top}Q_{o}p_{t}+p^{\top}_{t}Q_{o}p_{t}-\mathrm{tr}(Q_{o}P)\Big]^{2}\nonumber\\[5pt]
&=(p^{\top}_{t}Q_{o}p_{t}-\mathrm{tr}(Q_{o}P))^{2}\nonumber\\[5pt]&\hspace{10pt}+4\tilde{y}^{\top}_{t}Q_{o}p_{t}(p^{\top}_{t}Q_{o}p_{t}-\mathrm{tr}(Q_{o}P))\nonumber\\[5pt]
&\hspace{10pt}+4\tilde{y}_{t}^{\top}Q_{o}p_{t}p^{\top}_{t}Q_{o}\tilde{y}_{t},\label{difference22}
\end{align}
and by taking expectation in \eqref{difference22} we obtain
\begin{align}
   \hspace{-9pt} \mathbb{E}\big\{\Delta^{2}_{o,t}\big\}{=}m_{w\epsilon}+4\mathbb{E}\{\tilde{y}^{\top}_{t}Q_{o}\mathrm{M}\}+4\mathbb{E}\big\{\tilde{y}_{t}^{\top}Q_{o}PQ_{o}\tilde{y}_{t}\big\}.\label{katerinakigamiesai}
\end{align}
Further, $\tilde{y}_{t}=C\tilde{x}_{t}+\bar{\epsilon}=Cx_{t}-C\delta_{t}+\bar{\epsilon}$ and therefore, \eqref{katerinakigamiesai} may be written w.r.t. the system's state as
\begin{align}
    \mathbb{E}\big\{\Delta^{2}_{o,t}\big\}&=m_{w\epsilon}+\mathbb{E}\{4x_{t}^{\top}C^{\top}Q_{o}\mathrm{M}\}+4\bar{\epsilon}^{\top}Q_{o}\mathrm{M}
    \nonumber\\[5pt]&+4\mathbb{E}\big\{(x^{\top}_{t}C^{\top}+\zeta_{t}^{\top})Q_{o}PQ_{o}(Cx_{t}+\zeta_{t})\big\},\label{katerinakigamiesai2}
\end{align}
where we declared $\zeta_{t}=-C\delta_{t}+\bar{\epsilon}$. 
Thus, 
\begin{align}
\mathbb{E}\big\{\Delta^{2}_{o,t}\big\}&=m_{w\epsilon}+\mathbb{E}\big\{4x^{\top}_{t}C^{\top}Q_{o}\mathrm{M}\big\}\nonumber\\[5pt] &\hspace{10pt}+\mathbb{E}\big\{4x^{\top}_{t}C^{\top}Q_{o}PQ_{o}Cx_{t}\big\}\nonumber\\[5pt] &\hspace{10pt}+4\bar{\epsilon}Q_o\mathrm{M}-8\mathrm{tr}(C^{\top}Q_{o}PQ_{o}CW)\nonumber\\[5pt]
&\hspace{10pt}+\mathbb{E}\big\{8x^{\top}_{t}C^{\top}Q_{o}PQ_{o}\bar{\epsilon}\big\}
\nonumber\\[5pt]&\hspace{10pt}+\mathrm{tr}(4Q_{o}PQ_{o}Z),
\end{align}
where $ Z=\mathbb{E}\big\{\zeta_{t}\zeta_{t}^{\top}
    \big\}$. 
Therefore, the output risk constraint may be written as
\begin{align}
    \sum
    ^{N}_{t=1}\mathbb{E}\big\{4x^{\top}_{t}C^{\top}Q_{o}PQ_{o}Cx_{t}+4x^{\top}_{t}C^{\top}Q_{o}(\mathrm{M}+2P Q_{o} \bar{\epsilon})\big\}<\bar{\epsilon}_{o},\nonumber
\end{align}
where $\bar{\epsilon}_{o}{=}{-}Nm_{w\epsilon}{-}4N\bar{\epsilon}^{\top}Q_{o}\mathrm{M}_{}{-}8N\mathrm{tr}(C^{\top}Q_{o}PQ_{o}CW){-}4N\mathrm{tr}(Q_{o}PQ_{o}Z){+}\epsilon_{o}.$
\subsection{{Proof of Lemma 2}}\label{lemma2appendix}
The solution of \eqref{dynamics} may be written as 
\begin{align}
x_{t}=x^{r}_{t}+x^{d}_{t}
\end{align}
where $x^{r}_{t}$ transitions based on
\begin{align}
    x^{r}_{t+1}=Ax^{r}_{t}+w_{t}
\end{align}
with $x^{r}_{0}=x_{0}$, while $x^{d}_{t}$ according to
\begin{align}
    x^{d}_{t+1}=Ax^{d}_{t}+u_{t}
\end{align}
with $x^{d}_{0}=0$. Subsequently, \begin{align}
    y_{t}=y^{r}_{t}+y^{d}_{t},
\end{align}
where $y^{r}_{t}=Cx^{r}_{t}+v_{t}$, and $y^{d}_{t}=Cx^{d}_{t}$.
We will show that \begin{align}
    \mathscr{F}_{t}\triangleq\sigma(y_{0:t},u_{0:t-1})\equiv\sigma(y^{r}_{0:t}),
\end{align}
 By construction, $x^{d}_{t}$ and $y^{d}_{t}$ are deterministic functions of $u_{0:t}$. Since $y^{r}_{t}=y_{t}-y^{d}_{t}$, $y^{r}_{t}$ is a function of $(y_{t},u_{0:t})$. Thus, $y^{r}_{0:t}$ is a function of $(y_{0:t},u_{0:t})$ and as a result
$\sigma(y^{r}_{0:t})\subseteq\sigma(y_{0:t},u_{0:t-1})=\mathscr{F}_{t}$.\\
To prove the reverse inclusion, fix a (deterministic) policy 
$\pi$,
and further write $y_{0:t}=y^{r}_{0:t}+y^{d}_{0:t}$. Note that 
\begin{align}
y^{d}_{0}&=Cx_{0}\nonumber\\[5pt]
y_{1}^{d}&=C(Ax_{0}+B\pi(y^{r}_{0}+Cx_{0}))\nonumber\\[5pt]
&=\varphi_{1}(y^{r}_{0})\nonumber\\[5pt]
y^{d}_{2}&=C(A^{2}x_{0}+AB\pi(y^{r}_{0}+Cx_{0})+B\pi(y^{r}_{0:1}+y^{d}_{0:1}))\nonumber\\[5pt]
&=\varphi_{2}(y^{r}_{0:1})\nonumber\\[5pt]
&\vdots\nonumber\\[5pt]
y^{d}_{n}&=\varphi_{n}(y^{r}_{0:n-1}),\nonumber
\end{align}
where $\varphi_{n}:\mathbb{R}^{q\times{n}}\rightarrow \mathbb{R}^{q}$ are deterministic functions. Thus, 
\begin{align}
    u_{0:t-1}&=\pi(y_{0:t-1})
    =\pi(y^{r}_{0:t-1}+\varphi_{t}(y^{r}_{0:t-1})).\nonumber
\end{align}
As a result,
\begin{align}
(y_{0:t},u_{0:t-1}){=}\big(y^{r}_{0:t}+\varphi_{t}(y^{r}_{0:t-1}),\pi(y^{r}_{0:t-1}+\varphi_{t}(y^{r}_{0:t-1}))\big)\nonumber
\end{align}
and therefore
\begin{align}
 \sigma(y_{0:t},u_{0:t-1})\subseteq\sigma(y^{r}_{0:t})\nonumber
\end{align}
which concludes the proof.

\subsection{{Proof of Theorem 2}}\label{th2appendix}
\subsubsection{Bellman's Equation}
Consider the tail sub-problem
\begin{align}
\mathcal{L}^{*}(\mathscr{F}_{k};\mu_{s,o})&\triangleq\inf_{u_{k:N-1}}\mathbb{E}\Bigg\{ g(x_{N};\mu_{s,o})+\sum^{N-1}_{t=k}g(x_{t},u_{t};\mu_{s,o})\Bigg|\mathscr{F}_{k}\Bigg\}\nonumber\\[5pt]
&=\inf_{u_{k:N-1}}\mathbb{E}\Bigg\{ g(x_{N};\mu_{s,o})+\sum^{N-1}_{t=k+1}g(x_{t},u_{t};\mu_{s,o})+g(x_{k},u_{t};\mu_{s,o})\Bigg|\mathscr{F}_{k}\Bigg\}\nonumber\\[5pt]
&=\inf_{u_{k:N-1}}\mathbb{E}\Bigg\{ g(x_{N};\mu_{s,o})+\sum^{N-1}_{t=k+1}g(x_{t},u_{t};\mu_{s,o})+g(\widehat{x}_{k|k},u_{k};\mu_{s,o})+\phi(\widehat{x}_{k|k},e_{k};\mu_{s,o})\Bigg|\mathscr{F}_{k}\Bigg\}\nonumber\\[5pt]
&=\inf_{u_{k:N-1}}\mathbb{E}\Bigg\{ g(x_{N};\mu_{s,o})+\sum^{N-1}_{t=k+1}g(x_{t},u_{t};\mu_{s,o})+g(\widehat{x}_{k|k},u_{k};\mu_{s,o})+\mathrm{tr}(Q_{\mu_{s,o}}\Sigma_{k})\Bigg|\mathscr{F}_{k}\Bigg\}.\label{papoulias}
\end{align}
The latter rests on the fact that
$\mathbb{E}\{\phi(\widehat{x}_{k|k},e_{k};\mu_{s,o}) |\mathscr{F}_{k}\}=\mathrm{tr}(Q_{\mu_{s,o}}\mathbb{E}\{e_{k}e^{\top}_{k}|\mathscr{F}_{k}\})$. Thus
\begin{align}
\mathcal{L}^{*}(\mathscr{F}_{k};\mu_{s,o})&=\inf_{u_{k}}~\mathbb{E}\Bigg\{\inf_{u_{k+1:N-1}}\Bigg(\mathbb{E}\Bigg\{ g(x_{N};\mu_{s,o})+\sum^{N-1}_{t=k+1}g(x_{t},u_{t};\mu_{s,o})\Bigg|\mathscr{F}_{k+1}\Bigg\}\Bigg)\nonumber\\[5pt]
&\hspace{4cm}+g(\widehat{x}_{k|k},u_{k};\mu_{s,o})+\mathrm{tr}(Q_{\mu_{s,o}}\mathbb{E}\{e_{k}e^{\top}_{k}|\mathscr{F}_{k}\})\Bigg|\mathscr{F}_{k}\Bigg\}\nonumber\\[5pt]
&=\inf_{u_{k}}~\mathbb{E}\Bigg\{\mathcal{L}^{*}(\mathscr{F}_{k+1};\mu_{s,o})+g(\widehat{x}_{k|k},u_{k};\mu_{s,o})+\mathrm{tr}(Q_{\mu_{s,o}}\mathbb{E}\{e_{k}e^{\top}_{k}|\mathscr{F}_{k}\})\Bigg|\mathscr{F}_{k}\Bigg\}\nonumber\\[5pt]
&=\inf_{u_{k}}~\Big(\mathbb{E}\Big\{\mathcal{L}^{*}(\mathscr{F}_{k+1};\mu_{s,o})\Big|\mathscr{F}_{k}\Big\}+g(\widehat{x}_{k|k},u_{k};\mu_{s,o})+\mathrm{tr}(Q_{\mu_{s,o}}\mathbb{E}\{e_{k}e^{\top}_{k}|\mathscr{F}_{k}\})\Big)\nonumber\\[5pt]
&=\inf_{u_{k}}~\Big(\mathbb{E}\Big\{\mathcal{L}^{*}(\mathscr{F}_{k+1};\mu_{s,o})\Big|\mathscr{F}_{k}\Big\}+g(\widehat{x}_{k|k},u_{k};{\mu_{s,o}})\Big)+\mathrm{tr}(Q_{\mu_{s,o}}\mathbb{E}\{e_{k}e^{\top}_{k}|\mathscr{F}_{k}\}),
\end{align}
where the latter is based on \textit{Lemma \ref{lem2}}. Thus, Bellman equation reads 
\begin{align}
\mathcal{L}^{*}(\mathscr{F}_{k};\mu_{s,o})&=\inf_{u_{k}}~\Big(\mathbb{E}\Big\{\mathcal{L}^{*}(\mathscr{F}_{k+1};\mu_{s,o})\Big|\mathscr{F}_{k}\Big\}+g(\widehat{x}_{k|k},u_{k};{\mu_{s,o}})\Big)+\mathrm{tr}(Q_{\mu_{s,o}}\Sigma_{k}).
\end{align}

\subsubsection{Optimal Policy}
The proof follows by induction. We demonstrate the proof for the root. Bellman's starts from $k=N$ with 
\begin{align}
    \mathcal{L}^{*}(\mathscr{F}_{N};\mu_{s,o})&=\mathbb{E}\Big\{g(x_{N};\mu_{s,o})~\big|~\mathscr{F}_{N}\Big\}
    =\mathbb{E}\Big\{
    x^{\top}_{N}Q_{\mu_{s,o}}x_{N}+x^{\top}_{N}\mathrm{M}_{\mu_{s,o}}~\big|~\mathscr{F}_{N}\Big\}\nonumber
\end{align}
At this point decompose $x_{N}=\widehat{x}_{N|N}+e_{N}$ to obtain 
\begin{align}
    \mathcal{L}^{*}(\mathscr{F}_{N};\mu_{s,o})&=\mathbb{E}\Big\{
    \widehat{x}_{N|N}^{\top}Q_{\mu_{s,o}}\widehat{x}_{N|N}+\widehat{x}_{N|N}^{\top}\mathrm{M}_{\mu_{s,o}}+2\widehat{x}^{\top}_{N|N}Q_{\mu_{s,o}}e_{N}+e^{\top}_{N}Q_{\mu_{s,o}}e_{N}+e^{\top}_{N}\mathrm{M}_{\mu_{s,o}}
    ~\big|~\mathscr{F}_{N}\Big\}\nonumber\\[5pt]
    &=\widehat{x}_{N|N}^{\top}Q_{\mu_{s,o}}\widehat{x}_{N|N}+\widehat{x}_{N|N}^{\top}\mathrm{M}_{\mu_{s,o}}+\mathrm{tr}\Big( {Q}_{\mu_{s,o}}\Sigma_{N|N}\Big)
\end{align}
The latter rests of the fact that $\mathbb{E}\{ \widehat{x}_{N|N}^{\top}Q_{\mu_{s,o}}e_{N}|\mathscr{F}_{N}\}= \widehat{x}_{N|N}^{\top}Q_{\mu_{s,o}}\mathbb{E}\{e_{N}|\mathscr{F}_{N}\}=0$ and $\mathbb{E}\{e^{\top}_{N}\mathrm{M}_{\mu_{s,o}}|\mathscr{F}_{N}\}=
\mathbb{E}\{e^{\top}_{N}|\mathscr{F}_{N}\}\mathrm{M}_{\mu_{s,o}}=0$.
Thus, 
\begin{align}
\mathcal{L}^{*}(\mathscr{F}_{N};\mu_{s,o})&=\widehat{x}_{N|N}^{\top}Q_{\mu_{s,o}}\widehat{x}_{N|N}+\widehat{x}_{N|N}^{\top}\mathrm{M}_{\mu_{s,o}}+\mathrm{tr}\Big(Q_{\mu_{s,o}}\Sigma_{N|N}\Big)
\end{align}
or 
\begin{align}
\mathcal{L}^{*}(\mathscr{F}_{N};\mu_{s,o})&=\widehat{x}_{N|N}^{\top}V_{N}\widehat{x}_{N|N}+2\widehat{x}^{\top}_{N|N}T_{N}\bar{w}+\widehat{x}_{N|N}^{\top}S_{N}\mathrm{M}_{\mu_{s,o}}+p_{N},
\end{align}
where $V_{N}=Q_{\mu_{s,o}}$, $S_{N}=I$, $T_{N}=0$, and $p_{N}=\mathrm{tr}\Big(Q_{\mu_{s,o}}\Sigma_{N|N}\Big)$. Further, 
\begin{align}
\mathcal{L}^{*}(\mathscr{F}_{N-1};\mu_{s,o})&=\inf_{u_{N-1}}~\Big(\mathbb{E}\Big\{\mathcal{L}^{*}(\mathscr{F}_{N};\mu_{s,o})\Big|\mathscr{F}_{N-1}\Big\}+g(\widehat{x}_{N-1|N-1},u_{N-1};{\mu_{s,o}})\Big)+\mathrm{tr}(Q_{\mu_{s,o}}\Sigma_{N-1|N-1}).\label{63}
\end{align}
Starting with $\mathbb{E}\Big\{\mathcal{L}^{*}(\mathscr{F}_{N};\mu_{s,o})\Big|\mathscr{F}_{N-1}\Big\}$, we obtain

\begin{align}
\mathbb{E}\Big\{\mathcal{L}^{*}(\mathscr{F}_{N};\mu_{s,o})\Big|\mathscr{F}_{N-1}\Big\}&= \mathbb{E}\Big\{
\widehat{x}_{N|N}^{\top}V_{N}\widehat{x}_{N|N}+\widehat{x}_{N|N}^{\top}S_{N}\mathrm{M}_{\mu_{s,o}}+p_{N}\Big|\mathscr{F}_{N-1}\Big\},
\end{align}
and after decomposing $\widehat{x}_{N|N}=x_{N}-e_{N}$ we get
\begin{align}
\mathbb{E}\Big\{\mathcal{L}^{*}(\mathscr{F}_{N};\mu_{s,o})\Big|\mathscr{F}_{N-1}\Big\}&= \mathbb{E}\Big\{
{x}_{N}^{\top}V_{N}{x}_{N}+{x}_{N}^{\top}S_{N}\mathrm{M}_{\mu_{s,o}}+p_{N}-2x^{\top}_{N}V_{N}e_{N}+e^{\top}_{N}V_{N}e_{N}-e^{\top}_{N}S_{N}\mathrm{M}_{\mu_{s,o}}
\Big|\mathscr{F}_{N-1}\Big\}\nonumber\\[5pt]
&=\mathbb{E}\Big\{
{x}_{N}^{\top}V_{N}{x}_{N}+{x}_{N}^{\top}S_{N}\mathrm{M}_{\mu_{s,o}}+p_{N}\Big|\mathscr{F}_{N-1}\Big\}-2\mathrm{tr}(V_{N}H_{N|N-1})+\mathrm{tr}(V_{N}\Sigma_{N|N}).\label{65}
\end{align}
Further, given that $x_{N}=Ax_{N-1}+Bu_{N-1}+w_{N}$ the first term reads:
\begin{align}
&\mathbb{E}\Big\{
(Ax_{N-1}+Bu_{N-1}+w_{N})^{\top}V_{N}(Ax_{N-1}+Bu_{N-1}+w_{N})+(Ax_{N-1}+Bu_{N-1}+w_{N})^{\top}S_{N}\mathrm{M}(\mu_{s,o}+p_{N}\Big|\mathscr{F}_{N-1}\Big\}=\nonumber\\[5pt]
&=\mathbb{E}\Big\{
x^{\top}_{N-1}A^{\top}V_{N}Ax_{N-1}+2x^{\top}_{N-1}A^{\top}V_{N}Bu_{N-1}+2x^{\top}_{N-1}A^{\top}V_{N}w_{N}+u^{\top}_{N-1}B^{\top}V_{N}Bu_{N-1}+2u^{\top}_{N-1}B^{\top}V_{N}w_{N}\nonumber\\[5pt]
&+w^{\top}_{N}V_{N}w_{N}+x^{\top}_{N-1}A^{\top}S_{N}\mathrm{M}_{\mu_{s,o}}+u^{\top}_{N-1}B^{\top}S_{N}\mathrm{M}_{\mu_{s,o}}+w^{\top}_{N}S_{N}\mathrm{M}_{\mu_{s,o}}
\Big|\mathscr{F}_{N-1}\Big\},\label{66}
\end{align}
and after decomposing $x_{N-1}=\widehat{x}_{N-1|N-1}+e_{N-1}$, the first term in \eqref{65} yields
\begin{align}
&\mathbb{E}\Big\{
{x}_{N}^{\top}V_{N}{x}_{N}+{x}_{N}^{\top}S_{N}\mathrm{M}_{\mu_{s,o}}+p_{N}\Big|\mathscr{F}_{N-1}\Big\}=\nonumber\\[5pt]
&=\widehat{x}^{\top}_{N-1|N-1}A^{\top}V_{N}A\widehat{x}_{N-1|N-1}+2\widehat{x}^{\top}_{N-1|N-1}A^{\top}V_{N}Bu_{N-1}+2\widehat{x}^{\top}_{N-1|N-1}A^{\top}V_{N}\bar{w}+u^{\top}_{N-1}B^{\top}V_{N}Bu_{N-1}\nonumber\\[5pt]
&+2u^{\top}_{N-1}B^{\top}V_{N}\bar{w}+\bar{w}^{\top}V_{N}\bar{w}+\mathrm{tr}(V_{N}W)+\widehat{x}^{\top}_{N-1|N-1}A^{\top}S_{N}\mathrm{M}_{\mu_{s,o}}+u^{\top}_{N-1}B^{\top}S_{N}\mathrm{M}_{\mu_{s,o}}+\bar{w}^{\top}S_{N}\mathrm{M}_{\mu_{s,o}}\nonumber\\[5pt]
&+\mathrm{tr}(A^{\top}V_{N}A\Sigma_{N-1|N-1})
\end{align}
Thus, 

\begin{align}
&\mathbb{E}\Big\{\mathcal{L}^{*}(\mathscr{F}_{N};\mu_{s,o})\Big|\mathscr{F}_{N-1}\Big\}\nonumber\\[5pt]
&=\widehat{x}^{\top}_{N-1|N-1}A^{\top}V_{N}A\widehat{x}_{N-1|N-1}+2\widehat{x}^{\top}_{N-1|N-1}A^{\top}V_{N}Bu_{N-1}+2\widehat{x}^{\top}_{N-1|N-1}A^{\top}V_{N}\bar{w}+u^{\top}_{N-1}B^{\top}V_{N}Bu_{N-1}\nonumber\\[5pt]
&+2u^{\top}_{N-1}B^{\top}V_{N}\bar{w}+\bar{w}^{\top}V_{N}\bar{w}+\mathrm{tr}(V_{N}W)+\widehat{x}^{\top}_{N-1|N-1}A^{\top}S_{N}\mathrm{M}_{\mu_{s,o}}+u^{\top}_{N-1}B^{\top}S_{N}\mathrm{M}_{\mu_{s,o}}+\bar{w}^{\top}S_{N}\mathrm{M}_{\mu_{s,o}}\nonumber\\[5pt]
&+\mathrm{tr}(A^{\top}V_{N}A\Sigma_{N-1|N-1})-2\mathrm{tr}(V_{N}H_{N|N-1})+\mathrm{tr}(V_{N}\Sigma_{N|N}).
\end{align}
As a result, the minimization of
\begin{align}
\inf_{u_{N-1}}~\Big(\mathbb{E}\Big\{\mathcal{L}^{*}(\mathscr{F}_{N};\mu_{s,o})\Big|\mathscr{F}_{N-1}\Big\}+g(\widehat{x}_{N-1|N-1},u_{N-1};{\mu_{s,o}})\Big)
\end{align}
in \eqref{63} rests on \textit{Lemma} \ref{lem2},  and provides the optimal control \begin{align}
 u^{*}_{N-1}=&-\left(B^{\top} V_{N} B+R\right)^{-1}B^{\top}V_{N}A\widehat{x}_{N-1|N-1}\nonumber\\[5pt]&-\left(B^{\top} V_{N} B+R\right)^{-1}B^{\top}V_{N}\bar{w}\nonumber\\[5pt]&-\frac{1}{2}\left(B^{\top} V_{N} B+R\right)^{-1}B^{\top}S_{N}\mathrm{M}_{\mu_{s,o}}
\end{align}
or 
\begin{align}
u^{*}_{N-1}=K_{N-1}\widehat{x}_{N-1|N-1}+h_{N-1}+l_{N-1}  
\end{align}
where 
\begin{align}
    K_{N-1}&=-\left(B^{\top} V_{N} B+R\right)^{-1}B^{\top}V_{N}A\\
    h_{N-1}&=-\left(B^{\top} V_{N} B+R\right)^{-1}B^{\top}V_{N}\bar{w}\\
    l_{N-1}&=-\frac{1}{2}\left(B^{\top} V_{N} B+R\right)^{-1}B^{\top}S_{N}\mathrm{M}_{\mu_{s,o}}
\end{align}
By substituting $u^{*}_{N-1}$, we obtain the optimal cost-to-go at $t=N-1$ as follows
\begin{align}
&\widehat{x}^{\top}_{N-1|N-1}\big(A^{\top}V_{N}+Q_{\mu_{s,o}}-A^{\top}V_{N}B(B^{\top}V_{N}B+R)^{-1}B^{\top}V_{N}A
\big)\widehat{x}_{N-1|N-1}\nonumber\\[5pt]
&+\widehat{x}^{\top}_{N-1|N-1}\big(2A^{\top}-2(B^{\top}V_{N}B+R)^{-1}B^{\top}V_{N}AB^{\top}V_{N}
\big)\bar{w}\nonumber\\[5pt]
&+\widehat{x}^{\top}_{N-1|N-1}\big(A^{\top}S_{N}-A^{\top}V_{N}B(B^{\top}V_{N}B+R)^{-1}B^{\top}S_{N}+I
\big)\mathrm{M}_{\mu_{s,o}}\nonumber\\[5pt]
&+(h_{N-1}+l_{N-1})^{\top}(B^{\top}V_{N}B+R)(h_{N-1}+l_{N-1})+(h_{N-1}+l_{N-1})^{\top}\big(2B^{\top}V_{N}\bar{w}+B^{\top}S_{N}\mathrm{M}_{\mu_{s,o}}
\big)\nonumber\\[5pt]
&+\bar{w}^{\top}V_{N}\bar{w}+\mathrm{tr}(V_{N}W)+\bar{w}^{\top}S_{N}\mathrm{M}_{\mu_{s,o}}+\mathrm{tr}(V_{N}\Sigma_{N|N-1})-2\mathrm{tr}(V_{N}H_{N|N-1})+\mathrm{tr}(V_{N}\Sigma_{N-1|N-1}).
\end{align}
The second line reads 
\begin{align}
&\widehat{x}^{\top}_{N-1|N-1}\big(2A^{\top}-2(B^{\top}V_{N}B+R)^{-1}B^{\top}V_{N}AB^{\top}V_{N}
\big)\bar{w}\nonumber\\[5pt]
&+\widehat{x}^{\top}_{N-1|N-1}\big(A^{\top}S_{N}-A^{\top}V_{N}B(B^{\top}V_{N}B+R)^{-1}B^{\top}S_{N}
\big)\mathrm{M}_{\mu_{s,o}}\nonumber\\[5pt]
&=2\widehat{x}^{\top}_{N-1|N-1}(A^{\top}V_{N}+K^{\top}_{N-1}B^{\top}V_{N}
)\bar{w}\nonumber\\[5pt]
&=2\widehat{x}^{\top}_{N-1|N-1}\big(A^{\top}+K^{\top}_{N-1}B^{\top}
\big)V_{N}\bar{w}\nonumber\\[5pt]
&=2\bar{w}^{\top}V_{N}\big( A+BK_{N-1}\big)\widehat{x}_{N-1|N-1}
\nonumber\\[5pt]
&=2\widehat{x}^{\top}_{N-1|N-1}\big(A+BK_{N-1}\big)^{\top}V_{N}\bar{w}.
\end{align}
The third line reads 
\begin{align}
&\widehat{x}^{\top}_{N-1|N-1}\big(A^{\top}S_{N}-A^{\top}V_{N}BB^{\top}V_{N}B+R)^{-1}B^{\top}S_{N}
+I\big)\mathrm{M}_{\mu_{s,o}}\nonumber\\[5pt]
&=\widehat{x}^{\top}_{N-1|N-1}\big((A^{\top}-K^{\top}_{N-1}B^{\top})S_{N}+I
\big)\mathrm{M}_{\mu_{s,o}}.\nonumber\\[5pt]
\end{align}
For the fourth line 
\begin{align}
(h_{N-1}+l_{N-1})^{\top}(B^{\top}V_{N}B+R)(h_{N-1}+l_{N-1})+(h_{N-1}+l_{N-1})^{\top}\big(2B^{\top}V_{N}\bar{w}+B^{\top}S_{N}\mathrm{M}_{\mu_{s,o}},
\big),\nonumber\\[5pt]
\end{align}
just multiply and divide by $(B^{\top}V_{N}B+R)$ to obtain 
\begin{align}
&(h_{N-1}+l_{N-1})^{\top}(B^{\top}V_{N}B+R)(h_{N-1}+l_{N-1})\nonumber\\[5pt]
&+(h_{N-1}+l_{N-1})^{\top}(B^{\top}V_{N}B+R)(B^{\top}V_{N}B+R)^{-1}\big(2B^{\top}V_{N}\bar{w}+B^{\top}S_{N}\mathrm{M}_{\mu_{s,o}}
\big)\nonumber\\[5pt]
&=(h_{N-1}+l_{N-1})^{\top}(B^{\top}V_{N}B+R)(h_{N-1}+l_{N-1})\nonumber\\[5pt]
&+(h_{N-1}+l_{N-1})^{\top}(B^{\top}V_{N}B+R)(B^{\top}V_{N}B+R)^{-1}\big(2B^{\top}V_{N}\bar{w}+B^{\top}S_{N}\mathrm{M}_{\mu_{s,o}}
\big)\nonumber\\[5pt]
&=(h_{N-1}+l_{N-1})^{\top}(B^{\top}V_{N}B+R)(h_{N-1}+l_{N-1})\nonumber\\[5pt]
&-2(h_{N-1}+l_{N-1})^{\top}(B^{\top}V_{N}B+R)(B^{\top}V_{N}B+R)^{-1}\big(-B^{\top}V_{N}\bar{w}-\frac{1}{2}B^{\top}S_{N}\mathrm{M}_{\mu_{s,o}}
\big)\nonumber\\[5pt]
&=(h_{N-1}+l_{N-1})^{\top}(B^{\top}V_{N}B+R)(h_{N-1}+l_{N-1})\nonumber\\[5pt]
&-2(h_{N-1}+l_{N-1})^{\top}(B^{\top}V_{N}B+R)(h_{N-1}+l_{N-1}
\big)\nonumber\\[5pt]
&=-(h_{N-1}+l_{N-1})^{\top}(B^{\top}V_{N}B+R)(h_{N-1}+l_{N-1}
\big).
\end{align}
Thus, we may set 
\begin{align}
&V_{N-1}=A^{\top}V_{N}+Q_{\mu_{s,o}}-A^{\top}V_{N}B(B^{\top}V_{N}B+R)^{-1}B^{\top}V_{N}A,\nonumber\\[5pt]
&T_{N-1}=(A+BK_{N-1})^{\top}V_{N},\nonumber\\[5pt]
&S_{N-1}=(A+BK_{N-1})^{\top}S_{N}+I,\nonumber\\[5pt]
&p_{N-1}=p_{N}+\bar{w}^{\top}V_{N}\bar{w}+\mathrm{tr}(V_{N}W)+\bar{w}^{\top}S_{N}\mathrm{M}_{\mu_{s,o}}+\mathrm{tr}(V_{N}\Sigma_{N|N-1})-2\mathrm{tr}(V_{N}H_{N|N-1})+\mathrm{tr}(V_{N}\Sigma_{N-1|N-1})\nonumber\\[5pt]
&-(h_{N-1}+l_{N-1})^{\top}(B^{\top}V_{N}B+R)(h_{N-1}+l_{N-1}
\big),
\end{align}
and write the optimal cost-to-go at $t=N-1$ as
\begin{align}
\mathcal{L}^{*}(\mathscr{F}_{N-1};\mu_{s,o}) =\widehat{x}^{\top}_{N-1|N-1}V_{N-1}\widehat{x}_{N-1|N-1}+2\widehat{x}^{\top}_{N-1|N-1}T_{N-1}\bar{w}+\widehat{x}^{\top}_{N-1|N-1}S_{N-1}\mathrm{M}_{\mu_{s,o}}+p_{N-1}.
\end{align}
to conclude the proof for the root. The transition of truthfulness follows from similar steps.

\bibliographystyle{IEEEtran}
\bibliography{IEEEabrv,refs.bib,additional.bib}

\begin{thebibliography}{10}
\providecommand{\url}[1]{#1}
\csname url@samestyle\endcsname
\providecommand{\newblock}{\relax}
\providecommand{\bibinfo}[2]{#2}
\providecommand{\BIBentrySTDinterwordspacing}{\spaceskip=0pt\relax}
\providecommand{\BIBentryALTinterwordstretchfactor}{4}
\providecommand{\BIBentryALTinterwordspacing}{\spaceskip=\fontdimen2\font plus
\BIBentryALTinterwordstretchfactor\fontdimen3\font minus
  \fontdimen4\font\relax}
\providecommand{\BIBforeignlanguage}[2]{{%
\expandafter\ifx\csname l@#1\endcsname\relax
\typeout{** WARNING: IEEEtran.bst: No hyphenation pattern has been}%
\typeout{** loaded for the language `#1'. Using the pattern for}%
\typeout{** the default language instead.}%
\else
\language=\csname l@#1\endcsname
\fi
#2}}
\providecommand{\BIBdecl}{\relax}
\BIBdecl

\bibitem{kim2019bi}
S.-K. Kim, R.~Thakker, and A.-A. Agha-Mohammadi, ``Bi-directional value
  learning for risk-aware planning under uncertainty,'' \emph{IEEE Robotics and
  Automation Letters}, vol.~4, no.~3, pp. 2493--2500, 2019.

\bibitem{pereira2013risk}
A.~A. Pereira, J.~Binney, G.~A. Hollinger, and G.~S. Sukhatme, ``Risk-aware
  path planning for autonomous underwater vehicles using predictive ocean
  models,'' \emph{Journal of Field Robotics}, vol.~30, no.~5, pp. 741--762,
  2013.

\bibitem{ma2018risk}
W.-J. Ma, C.~Oh, Y.~Liu, D.~Dentcheva, and M.~M. Zavlanos, ``Risk-averse access
  point selection in wireless communication networks,'' \emph{IEEE Transactions
  on Control of Network Systems}, vol.~6, no.~1, pp. 24--36, 2018.

\bibitem{bennis2018ultrareliable}
M.~Bennis, M.~Debbah, and H.~V. Poor, ``Ultrareliable and low-latency wireless
  communication: Tail, risk, and scale,'' \emph{Proceedings of the IEEE}, vol.
  106, no.~10, pp. 1834--1853, 2018.

\bibitem{iwaki2021multi}
T.~Iwaki, J.~Wu, Y.~Wu, H.~Sandberg, and K.~H. Johansson, ``Multi-hop sensor
  network scheduling for optimal remote estimation,'' \emph{Automatica}, vol.
  127, p. 109498, 2021.

\bibitem{fan2020bayesian}
D.~D. Fan, J.~Nguyen, R.~Thakker, N.~Alatur, A.-a. Agha-mohammadi, and E.~A.
  Theodorou, ``Bayesian learning-based adaptive control for safety critical
  systems,'' in \emph{2020 IEEE international conference on robotics and
  automation (ICRA)}.\hskip 1em plus 0.5em minus 0.4em\relax IEEE, 2020, pp.
  4093--4099.

\bibitem{lindemann2021stl}
L.~Lindemann, N.~Matni, and G.~J. Pappas, ``{STL} {R}obustness {R}isk over
  {D}iscrete-{T}ime {S}tochastic {P}rocesses,'' \emph{arXiv preprint
  arXiv:2104.01503}, 2021.

\bibitem{Cardoso2019}
A.~R. Cardoso and H.~Xu, ``Risk-{A}verse {S}tochastic {C}onvex {B}andit,'' in
  \emph{International Conference on Artificial Intelligence and Statistics},
  vol.~89, Apr. 2019, pp. 39--47.

\bibitem{dentcheva2017statistical}
D.~Dentcheva, S.~Penev, and A.~Ruszczy{\'n}ski, ``Statistical estimation of
  composite risk functionals and risk optimization problems,'' \emph{Annals of
  the Institute of Statistical Mathematics}, vol.~69, no.~4, pp. 737--760,
  2017.

\bibitem{tsiamis2021linear}
A.~Tsiamis, D.~S. Kalogerias, A.~Ribeiro, and G.~J. Pappas, ``Linear
  {Q}uadratic {C}ontrol with {R}isk {C}onstraints,'' \emph{arXiv preprint
  arXiv:2112.07564}, 2021.

\bibitem{whittle}
P.~Whittle, ``Risk-{S}ensitive {L}inear/{Q}uadratic/{G}aussian {C}ontrol,''
  \emph{Advances in Applied Probability}, vol.~13, no.~4, pp. 764--777, 1981.

\bibitem{anderson2012optimal}
B.~D. Anderson and J.~B. Moore, \emph{Optimal filtering}.\hskip 1em plus 0.5em
  minus 0.4em\relax Courier Corporation, 2012.

\bibitem{A.2018}
L.~A. Prashanth and M.~Fu, ``Risk-{S}ensitive {R}einforcement {L}earning: {A}
  {C}onstrained {O}ptimization {V}iewpoint,'' \emph{arXiv preprint,
  arXiv:1810.09126}, Oct. 2018.

\bibitem{W.Huang2017}
W.~Huang and W.~B. Haskell, ``Risk-{A}ware {Q}-learning for {M}arkov {D}ecision
  {P}rocesses,'' in \emph{2017 IEEE 56th Annual Conference on Decision and
  Control, CDC 2017}, vol. 2018-Janua.\hskip 1em plus 0.5em minus 0.4em\relax
  IEEE, Dec. 2018, pp. 4928--4933.

\bibitem{Jiang2017}
D.~R. Jiang and W.~B. Powell, ``Risk-{A}verse {A}pproximate {D}ynamic
  {P}rogramming with {Q}uantile-{B}ased {R}isk {M}easures,'' \emph{Mathematics
  of Operations Research}, vol.~43, no.~2, pp. 554--579, Nov. 2018.

\bibitem{Kalogerias2018b}
D.~S. Kalogerias and W.~B. Powell, ``Recursive {O}ptimization of {C}onvex
  {R}isk {M}easures: {M}ean-{S}emideviation {M}odels,'' \emph{arXiv preprint,
  arXiv:1804.00636}, Apr. 2018.

\bibitem{Tamar2017}
A.~Tamar, Y.~Chow, M.~Ghavamzadeh, and S.~Mannor, ``Sequential {D}ecision
  {M}aking with {C}oherent {R}isk,'' \emph{IEEE Transactions on Automatic
  Control}, vol.~62, no.~7, pp. 3323--3338, Jul. 2017.

\bibitem{Vitt2018}
C.~A. Vitt, D.~Dentcheva, and H.~Xiong, ``Risk-{A}verse {C}lassification,''
  \emph{Annals of Operations Research}, Aug. 2019.

\bibitem{Zhou2018}
L.~Zhou and P.~Tokekar, ``An {A}pproximation {A}lgorithm for {R}isk-averse
  {S}ubmodular {O}ptimization,'' \emph{arXiv preprint, arXiv:1807.09358}, Jul.
  2018.

\bibitem{Ruszczynski2010}
A.~Ruszczy{\'{n}}ski, ``Risk-{A}verse {D}ynamic {P}rogramming for {M}arkov
  {D}ecision {P}rocesses,'' \emph{Mathematical Programming}, vol. 125, no.~2,
  pp. 235--261, Oct. 2010.

\bibitem{SOPASAKIS2019281}
P.~Sopasakis, D.~Herceg, A.~Bemporad, and P.~Patrinos, ``Risk-averse model
  predictive control,'' \emph{Automatica}, vol. 100, pp. 281 -- 288, 2019.

\bibitem{chapman2019cvar}
M.~P. {Chapman}, J.~{Lacotte}, A.~{Tamar}, D.~{Lee}, K.~M. {Smith}, V.~{Cheng},
  J.~F. {Fisac}, S.~{Jha}, M.~{Pavone}, and C.~J. {Tomlin}, ``A
  {R}isk-{S}ensitive {F}inite-{T}ime {R}eachability {A}pproach for {S}afety of
  {S}tochastic {D}ynamic {S}ystems,'' in \emph{2019 American Control Conference
  (ACC)}, 2019, pp. 2958--2963.

\bibitem{shapiro2021lectures}
A.~Shapiro, D.~Dentcheva, and A.~Ruszczynski, \emph{Lectures on stochastic
  programming: modeling and theory}.\hskip 1em plus 0.5em minus 0.4em\relax
  SIAM, 2021.

\bibitem{Markowitz1952}
H.~Markowitz, ``Portfolio {S}election,'' \emph{The Journal of Finance}, vol.~7,
  no.~1, pp. 77--91, Mar. 1952.

\bibitem{Rockafellar1997}
R.~T. Rockafellar and S.~Uryasev, ``Optimization of {C}onditional
  {V}alue-at-{R}isk,'' \emph{Journal of Risk}, vol.~2, pp. 21--41, 1997.

\bibitem{chapman2021classical}
M.~P. Chapman and K.~M. Smith, ``Classical risk-averse control for a
  finite-horizon borel model,'' \emph{IEEE Control Systems Letters}, vol.~6,
  pp. 1525--1530, 2021.

\bibitem{tsiamis2020risk}
A.~Tsiamis, D.~S. Kalogerias, L.~F. Chamon, A.~Ribeiro, and G.~J. Pappas,
  ``Risk-{C}onstrained {L}inear-{Q}uadratic {R}egulators,'' in \emph{2020 59th
  IEEE Conference on Decision and Control (CDC)}.\hskip 1em plus 0.5em minus
  0.4em\relax IEEE, 2020, pp. 3040--3047.

\bibitem{abeille2016lqg}
M.~Abeille, A.~Lazaric, X.~Brokmann \emph{et~al.}, ``Lqg for portfolio
  optimization,'' \emph{Available at SSRN 2863925}, 2016.

\bibitem{Speyer2008STOCHASTIC}
J.~L. Speyer and W.~H. Chung, \emph{Stochastic Processes, Estimation, and
  Control}.\hskip 1em plus 0.5em minus 0.4em\relax Siam, 2008, vol.~17.

\bibitem{ruszczynski2011nonlinear}
A.~Ruszczynski, \emph{Nonlinear Optimization}.\hskip 1em plus 0.5em minus
  0.4em\relax Princeton university press, 2011.

\bibitem{murray}
K.~J. {\AA}str{\"o}m and R.~M. Murray, \emph{Feedback systems}.\hskip 1em plus
  0.5em minus 0.4em\relax Princeton university press, 2010.

\end{thebibliography}
\end{document}